

\documentclass[12pt]{amsart}
\usepackage[margin=20mm]{geometry}
\usepackage{amsmath,amssymb}            
\usepackage{hyperref}


\newcommand{\N}{\mathbb{N}}

\DeclareMathOperator{\e}{\mathrm{e}}

\newcommand{\floor}[1]{\left\lfloor #1 \right\rfloor}
\newcommand{\abs}[1]{\left| #1 \right|}

\newcommand{\ld}{\ll_{_d}}
\newcommand{\rb}[1]{\left( #1 \right)}

\renewcommand{\mod}{\operatorname{mod}}
\newcommand{\Card}{\operatorname{Card}}

\newtheorem{theorem}{Theorem}
\newtheorem{proposition}{Proposition}

\newtheorem{lemma}{Lemma}

\allowdisplaybreaks

\begin{document}
\title{Randomness and non-randomness properties of Piatetski-Shapiro sequences modulo m}

\author{Jean-Marc Deshouillers}
\email{jean-marc.deshouillers@math.u-bordeaux.fr}
\address{Institut de Math\'ematiques de Bordeaux \\
Universit\'e de Bordeaux, CNRS, Bordeaux INP\\
351, cours de la Lib\'eration\\
33 405 Talence, France}
\author{Michael Drmota}
\email{michael.drmota@tuwien.ac.at}
\address{Institut f\"ur Diskrete Mathematik und Geometrie
TU Wien\\
Wiedner Hauptstr. 8--10\\
1040 Wien, Austria}
\author{Clemens M\"ullner}
\email{mullner@math.univ-lyon1.fr}
\address{
CNRS, Universit\'e de Lyon, Universit\'e Lyon 1, Institut Camille
Jordan, 43 boulevard du 11 novembre 1918, 69622 Villeurbanne Cedex, France
}

\author{Lukas Spiegelhofer}
\email{lukas.spiegelhofer@tuwien.ac.at}
\address{Institut f\"ur Diskrete Mathematik und Geometrie
TU Wien\\
Wiedner Hauptstr. 8--10\\
1040 Wien, Austria}
%

\subjclass[2010]{Primary: 11B50, 11B83; Secondary: 11K16, 11L07,11N37}


\date{\today} 
\keywords{exponential sum, Piatetski-Shapiro sequence, normal number, subword complexity, M\"obius orthogonality}
\thanks{This work was supported by the
Austrian Science Foundation FWF, SFB F5502-N26 ``Subsequences of Automatic Sequences and Uniform Distribution'', which is a part of the Special Research Program ``Quasi Monte Carlo Methods: Theory and Applications'',
by the joint ANR-FWF project  ANR-14-CE34-0009, I-1751 MuDeRa, Ci\^encia sem Fronteiras (project PVE 407308/2013-0) and by the European Research Council (ERC) under the European Union’s Horizon 2020 research and innovation programme under the Grant Agreement No 648132. }

\begin{abstract}
We study Piatetski-Shapiro sequences $(\lfloor n^c\rfloor)_n$ modulo m, for non-integer $c >1$ and positive $m$,
and we are particularly interested in subword occurrences in those sequences.
We prove that each block $\in\{0,1\}^k$ of length $k < c + 1$ occurs as a subword with the frequency $2^{-k}$, while there are always blocks that do not occur. In particular, those sequences are not normal.
For $1<c<2$, we estimate the number of subwords from above and below, yielding the fact that our sequences are deterministic and not morphic.
Finally, using the Daboussi-K\'{a}tai criterion, we prove that the sequence $\lfloor n^c\rfloor$ modulo m is asymptotically orthogonal to multiplicative functions bounded by $1$ and with mean value $0$.
\end{abstract}

\maketitle

\section{Introduction}

The purpose of this paper is to study properties of Piatetski-Shapiro sequences $(\lfloor n^c\rfloor)_n$ modulo $m$ for positive and non-integer $c> 1$.

We will show that the sequence $(x_n)_n$ where $x_n=\lfloor n^c\rfloor\bmod m$ has some quasi-random properties as well as properties similar to those of a determistic sequence.
For example $(x_n)_n$ is $k$-normal for $k\le c$ but not $k$-normal for all $k$.
On the other hand the sequence $(x_n)_n$ is asymptotically orthogonal to the M\"obius function as it is expected for determistic sequences.
We will be more precise on these statements in Section~\ref{sec_results}.

Piatetski-Shapiro sequences $(\lfloor n^c\rfloor)_n$ are very well studied sequences and are an active area of research.
They are named after I. Piatetski-Shapiro, who proved the following prime number theorem~\cite{P1953}:
if $1<c<\frac {12}{11}$, then
\begin{equation*}
\left\lvert\left\{n\leq x: \floor{n^c}\text{ is prime}\right\}\right\rvert\sim \frac{x}{c\log x}.
\end{equation*}
This asymptotic formula is now known for $1<c<\frac{2817}{2426}$ (see Rivat and Sargos~\cite{RS2001}), moreover, it is true for almost all $c\in(1,2)$ (see Leitmann and Wolke~\cite{LW1975}).
We also refer to the paper~\cite{BBBSW2009} by Baker et al., giving a collection of arithmetic results on Piatetski-Shapiro sequences.

A different line of research is given by $q$-\emph{multiplicative functions} $\varphi$ along Piatetski-Shapiro sequences.
These functions satisfy $\varphi(q^ka+b)=\varphi(q^ka)\varphi(b)$
for all $a,k\geq 0$ and $0\leq b<q^k$.
Mauduit and Rivat~\cite{MR2005} proved an asymptotic formula concerning $q$-multiplicative functions $\varphi:\mathbb N\rightarrow\{z:\lvert z\rvert=1\}$ along $\lfloor n^c\rfloor$, where $1<c<7/5$.
This contains in particular the result that the Thue--Morse sequence on $\{-1,+1\}$ (which is $2$-multiplicative) along $\lfloor n^c\rfloor$ attains each of its two values with asymptotic density $1/2$, as long as $c<1.4$.
M\"ullner and Spiegelhofer~\cite{MS2017} improved this bound to $1<c<1.5$, and very recently, Spiegelhofer~\cite{S2018} obtained the range $1<c<2$.
Moreover, Mauduit and Rivat's result was transferred to automatic sequences by Deshouillers, Drmota, and Morgenbesser~\cite{DDM2012}.

A more basic question concerns Piatetski-Shapiro sequences modulo $m$.
Rieger~\cite{R1967} proved an asymptotic expression for the number of $\lfloor n^c\rfloor$ that lie in a residue class modulo $m$,
a result that was sharpened by Deshouillers~\cite{D1973}.

Mauduit, Rivat and S\'ark\"ozy~\cite{MRS2002} studied pseudorandomness properties of $(\lfloor n^c\rfloor \bmod 2)_n$ (more precisely, of the sequence $(\lfloor 2n^c\rfloor\bmod 2)_n$).
They proved that the \emph{well distribution measure} and the \emph{correlation measure} of order $k$ of this sequence are both small; these properties are to be expected from a ``good'' pseudorandom sequence.

In the present paper, we continue the study of $(\lfloor n^c\rfloor\bmod m)_n$ and establish further randomness- and non-randomness properties of this sequence.

\section{Results}\label{sec_results}

Let $m$ and $k$ be positive integers; a sequence of integers $(u_n)_n$ is said to be $k$-uniformly distributed modulo $m$ if for every 
block $B \in \{0,1, \ldots, m-1\}^k$ 

\[
\lim_{N\to\infty} \frac 1N \Card \{n< N : (u_n,u_{n+1},\ldots,u_{n+k-1}) \equiv B (\operatorname{mod} m) \} = \frac 1{m^k};
\]
we equivalently say that the sequence $(u_n \,\operatorname{mod} m)_n$ is $k$-normal. We further say that $(u_n)_n$ is completely uniformly distributed modulo $m$ if it is $k$-uniformly distributed modulo $m$ for any $k$ or, equivalently, that $(u_n \operatorname{mod} m)_n$ is normal if it is $k$-normal for any $k$.\\

Our first result says that the Piatetski-Shapiro sequence modulo $m$ is $k$-uniformly distributed modulo $m$ up to some level in $k$. 

\begin{theorem}\label{thm_k_uniform}
Suppose that $c>1$ is not an integer and let $m$ be a positive integer. Then the sequence 
$(\lfloor n^c\rfloor)_n$ is $k$-uniformly distributed modulo $m$ for $1\le k\le c+1$.
\end{theorem}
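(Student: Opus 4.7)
The plan is to deduce Theorem~\ref{thm_k_uniform} from the stronger claim that the $\R^k$-valued sequence
\[
y_n := \bigl(\{n^c/m\},\{(n+1)^c/m\},\ldots,\{(n+k-1)^c/m\}\bigr)
\]
is equidistributed in the torus $(\R/\Z)^k$. The condition $\lfloor (n+j)^c\rfloor \equiv b_j \pmod m$ is equivalent to $\{(n+j)^c/m\}\in[b_j/m,(b_j+1)/m)$, so equidistribution of $(y_n)$ gives every box of the $m^{-k}$-grid on $[0,1)^k$ the correct density $m^{-k}$, and that is exactly $k$-uniform distribution modulo $m$. By Weyl's criterion on $(\R/\Z)^k$, together with the identity $e(h\{y\}) = e(hy)$ for $h\in\Z$ (where $e(t):=\exp(2\pi i t)$), the task reduces to showing that for every non-zero $\vec h \in \Z^k$,
\[
T_N(\vec h) := \sum_{n=1}^{N} e\!\left(\frac{1}{m}\sum_{j=0}^{k-1} h_j (n+j)^c\right) = o(N).
\]

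To attack $T_N(\vec h)$, I would Taylor-expand the phase in descending powers of $n$,
\[
\frac{1}{m}\sum_j h_j (n+j)^c = \sum_{i\ge 0}\binom{c}{i}\,\frac{P_i}{m}\,n^{c-i}, \qquad P_i := \sum_j h_j\, j^i,
\]
and exploit the fact that the Vandermonde matrix $(j^i)_{0\le i,j\le k-1}$ is non-singular over $\R$. Since $\vec h\neq 0$, there is a least index $i_0\in\{0,\ldots,k-1\}$ with $P_{i_0}\neq 0$. The hypothesis $k\le c+1$ forces $i_0\le k-1\le c$, and non-integrality of $c$ then yields both $c-i_0>0$ and $\binom{c}{i_0}\neq 0$. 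Writing $f(n)$ for the phase of $T_N(\vec h)$, one thus obtains
\[
f(n) = \gamma\,n^{c-i_0} + O\!\bigl(n^{c-i_0-1}\bigr), \quad \gamma\neq 0, \; c-i_0 > 0 \text{ non-integer.}
\]

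The heart of the matter, and the step I expect to be the main obstacle, is to prove $\sum_{n\le N}e(f(n)) = o(N)$ once the leading exponent has been isolated. For this I would appeal to the van der Corput $r$-th derivative test: since
\[
f^{(r)}(n) = \gamma\,(c-i_0)(c-i_0-1)\cdots(c-i_0-r+1)\, n^{c-i_0-r}\bigl(1+o(1)\bigr),
\]
with every factor non-zero (again because $c$ is not an integer), choosing $r > c-i_0$, for instance $r=\lceil c\rceil +1$, makes $|f^{(r)}(n)| \asymp n^{c-i_0-r}$ decay at a positive polynomial rate on $[1,N]$, and the $r$-th derivative test delivers $T_N(\vec h)\ll N^{1-\eta}$ for some $\eta>0$. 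The subtlety is making sure the Taylor remainder does not spoil the asymptotics of $f^{(r)}$; this is guaranteed by the fact that every $c-i$ is non-integer and every $\binom{c}{i}$ is non-zero, so the leading term genuinely dominates each derivative and the argument goes through cleanly.
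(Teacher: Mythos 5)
Your overall framework coincides with the paper's: reduce to showing the exponential sums $T_N(\vec h)$ are $o(N)$, expand $\sum_j h_j(n+j)^c$ in descending powers of $n$ to obtain coefficients proportional to $P_i = \sum_j h_j j^i$, invoke the Vandermonde matrix to find the least index $i_0 \le k-1$ with $P_{i_0}\neq 0$, observe that $k\le c+1$ and non-integrality of $c$ force $c-i_0>0$ and $\binom{c}{i_0}\neq 0$, and then apply a van der Corput-type derivative test. (The paper routes through the Erd\H{o}s-Tur\'an-Koksma discrepancy inequality rather than Weyl's criterion, but that is a cosmetic difference.) Up to this point your argument is sound and essentially identical to the paper's.

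The gap is in the final step. You fix a single derivative order $r = \lceil c\rceil + 1$ and assert that because $f^{(r)}(n)\asymp n^{c-i_0-r}$ decays polynomially, the $r$-th derivative test gives a power saving. This is false: the $r$-th derivative test is only nontrivial when $|f^{(r)}|$ lies in a bounded window, roughly $N^{-(4-2^{3-r})} \ll |f^{(r)}| \ll 1$ on $[N,2N]$; if $f^{(r)}$ decays faster than this the bound the test produces exceeds $N$ and is useless. With your choice of $r$ the exponent is $c-i_0-r = \{c\}-2-i_0$, and for $i_0$ near $k-1 \le \lfloor c\rfloor$ this can be arbitrarily negative as $c$ grows. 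The extreme case $i_0 = \lfloor c\rfloor$ makes the point vividly: then $f(n)\approx \gamma n^{\{c\}}$ with $\{c\}\in(0,1)$, a slowly growing phase for which the correct tool is the Kusmin--Landau first-derivative test, not a $(\lfloor c\rfloor+2)$-nd derivative test; the latter sees a derivative of size $N^{\{c\}-\lfloor c\rfloor-2}$, which is far too small. The paper avoids this by letting the derivative order \emph{depend} on $i_0$: one takes $r = \lceil c-i_0\rceil = \lfloor c\rfloor + 1 - i_0$, so that $f^{(r)}(x)\asymp N^{\{c\}-1}$ always has exponent in $(-1,0)$, and then invokes Kusmin--Landau when this order is $1$, the second-derivative test when it is $2$, and the general van der Corput $k$-th derivative test (GK Theorem~2.8) when it is $\ge 3$. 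Your proposal needs this case split and the $i_0$-dependent choice of derivative order to be correct.
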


However, this is no longer true for all $k$, even in a weaker sense. 
\begin{theorem}\label{thm_not_normal}
Let $m\ge 2$ be an integer and  let $c>1$ a real number which is not an integer. Then the sequence 
$x = (\lfloor n^c\rfloor \, \operatorname{mod} m)_n$ is not normal.
More precisely, there exist some $k$ and a block $B\in  \{0,1, \ldots, m-1\}^k$ which
does not appear in $x$.
\end{theorem}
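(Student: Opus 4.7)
The plan is to prove the stronger assertion by bounding the subword complexity of $x$ polynomially in $k$. Writing $p_x(k) := \Card\{(x_n, x_{n+1}, \ldots, x_{n+k-1}) : n \ge 1\}$, any bound of the form $p_x(k) \le C(c,m)\, k^D$ forces $p_x(k) < m^k$ for all sufficiently large $k$, and any length-$k$ block not in the image then witnesses the theorem.

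The mechanism is Taylor expansion around $n$. Letting $K := \lceil c \rceil$ (so $K+1 > c$ since $c$ is non-integer), for $0 \le j \le k-1$ I would write
\[
(n+j)^c = P_n(j) + R_n(j), \qquad P_n(j) := \sum_{i=0}^{K} \binom{c}{i} n^{c-i} j^i,
\]
with $\abs{R_n(j)} \ll_c k^{K+1} n^{c-K-1} \to 0$ as $n \to \infty$ for each fixed $k$. Consequently, for all $n \ge N_0(c,k)$ the block $B_n = (\floor{(n+j)^c} \bmod m)_{j=0}^{k-1}$ is essentially the ``ideal block'' $B_n^\ast := (\floor{P_n(j) \bmod m})_{j=0}^{k-1}$, which depends only on the $K+1$ parameters $\theta_i := \binom{c}{i} n^{c-i} \bmod m$ lying in $[0, m)$.

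A hyperplane-arrangement count then bounds the number of distinct ideal blocks. As $\theta = (\theta_0, \ldots, \theta_K)$ ranges over $[0,m)^{K+1}$, the map $\theta \mapsto B^\ast(\theta)$ is piecewise constant, with cells bounded by the hyperplanes $\{\theta : \sum_i \theta_i j^i \equiv r \pmod{m}\}$ for $r \in \Z$ and $0 \le j < k$. Only $O_m(k^{K+1})$ such hyperplanes meet the box $[0,m)^{K+1}$, yielding at most $O_{c,m}(k^{(K+1)^2})$ cells, hence only polynomially many ideal blocks in $k$. The finitely many blocks arising from $n < N_0(c,k)$ contribute a further polynomial term in $k$.

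The remaining step, and the main obstacle, is controlling the passage from ideal to actual blocks: the error $R_n(j)$ alters $\floor{P_n(j) \bmod m}$ only when $\{P_n(j)\}$ lies within $\abs{R_n(j)}$ of $0$ or $1$. Using Weyl-type equidistribution of the polynomial values $(P_n(j) \bmod 1)_j$, I would bound the number of such ``flippable'' positions uniformly in $n$, so that each ideal block yields only $O(1)$ actual blocks and the polynomial estimate $p_x(k) \le C(c,m) k^D$ is preserved. The quantitative challenge is to avoid a naive estimate which would multiply the count by $2^k$ and destroy the comparison with $m^k$ precisely when $m = 2$; getting effective equidistribution of $P_n(j) \bmod 1$ uniformly in $n$, for $j$ in the short range $\{0,\ldots,k-1\}$, is the real work.
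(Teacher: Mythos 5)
The outer structure is sound: a polynomial bound $p_x(k)\ll k^D$ indeed yields the theorem, the hyperplane-arrangement count giving $O(k^{(K+1)^2})$ \emph{ideal} blocks is correct, and the starting positions $n<N_0(c,k)$ contribute only polynomially many further blocks since $N_0(c,k)\asymp k^{(K+1)/(K+1-c)}$ is polynomial in $k$. But the step you flag as ``the real work'' is a genuine gap, not a technicality, and the paper's own results suggest it cannot be filled by the argument you sketch. For each fixed $n$ the error $R_n(j)$ is a definite, deterministic quantity, so the issue is not how many subsets of the flippable set can be realised, but whether the number of flippable positions $j\in\{0,\ldots,k-1\}$ with $\{P_n(j)/m\}$ within $O(|R_n(j)|/m)$ of an integer is small \emph{uniformly in $n$}. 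For a degree-$K$ polynomial with coefficients that, for bad $n$, may lie arbitrarily close to rationals with tiny denominators, the values $\{P_n(j)/m\}$ can cluster near $0$ for a positive proportion of $j$, and no Weyl-type estimate gives an $n$-uniform $O(1)$ (or even $O(\log k)$) bound. Notice that the paper proves a polynomial complexity bound only for $1<c<2$ (Theorem~\ref{thm_complexity}), where linear approximation applies and the flippable set can be controlled via Sturmian words and the three-gaps theorem; no analogous bound for $c>2$ is claimed, which is strong evidence that your proposed estimate is either false or very hard. Your route therefore requires a result strictly stronger than anything the paper establishes.

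The paper's actual proof of Theorem~\ref{thm_not_normal} sidesteps the complexity question entirely. It locally approximates $n^c$ by a degree-$d=\lfloor c\rfloor$ Taylor polynomial $P_X$ with small remainder and shows (Theorems~\ref{thm_pol_small_coeffs}--\ref{mis2}) that if a perturbed polynomial sequence avoided having small discrepancy then all nonconstant coefficients would be extremely well approximated by rationals with bounded denominators; restricting to the arithmetic progression of common denominator $M!$ then exhibits three consecutive sampled values lying in an interval of length $<1/m$, and a Rolle-type monotonicity argument (Lemma~\ref{monotonicity}) handles $m=2$. This produces an explicit block $B$ that can never occur, for \emph{all} non-integer $c>1$, without ever needing a global complexity bound. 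If you want to salvage your approach, the missing ingredient is a structural description of the flippable set analogous to the convexity/Rolle argument in the proof of Theorem~\ref{mis2}: the $(K+1)$-st derivative of $x\mapsto x^c$ has fixed sign, so the set of $j$ where the floor drops is a union of $O_c(1)$ intervals, which would multiply the ideal-block count by only $O(k^{2K})$; but you still need to rule out the pathological $n$ where the ideal block itself is sensitive, and this case analysis is precisely what the paper does via Diophantine approximation.
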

Note that this behaviour is different from that of $(s_2(\lfloor n^c \rfloor) \operatorname{mod} 2)_n$ (the Thue-Morse sequence along $(\lfloor n^c \rfloor)_n $) since in this case we have normality for $1 < c < 3/2$ \cite{MS2017}.\\
Next, we discuss the case $1< c < 2$ in more detail. 
We recall that the subword complexity $L_k$, $k\ge 1$, of a sequence $u$ 
with values in $ \{0,1, \ldots, m-1\}$
is the 
number of different blocks 
$B\in  \{0,1, \ldots, m-1\}^k$
that appear as a contiguous subsequence of $u$. A sequence $u$ is said to be deterministic if its topological entropy $h$ of the corresponding dynamical system is zero, or in other terms
\[
h = \lim_{k\to \infty} \frac 1k \log L_k = 0.
\]
Among the deterministic sequences, a simple class is that of morphic sequences which are the coding 
of a fixed point of a substitution, see Allouche and Shallit~\cite{AS2003}; they satisfy 
\[
L_k \ll k^2
\]

The following result implies that  for $1<c<2$, the sequence $(\lfloor n^c \rfloor \, \operatorname{mod} m)_n$ is deterministic but not morphic.

\begin{theorem}\label{thm_complexity}
Assume that $1<c<2$ and let $m\geq 2$ be an integer.
There exists a constant $C_1$ such that the subword complexity $L_k$ of the sequence $\bigl(\lfloor n^c\rfloor\bmod m\bigr)_{n}$ is bounded above by $C_1k^r$, for all $r>\max\{4/(2-c),6\}$.

Moreover, there is a constant $C_2$ such that $L_k\geq C_2k^3$.
\end{theorem}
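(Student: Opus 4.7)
The plan is to address the upper and lower bounds separately, both based on the Taylor expansion of $x\mapsto x^c$ around an integer $n$.

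\emph{Upper bound.} For $n$ in a dyadic block $[N,2N]$ and $0\le i<k$,
\[
(n+i)^c=n^c+cin^{c-1}+\tbinom{c}{2}i^2n^{c-2}+E_n(i),\qquad E_n(i)=O_c\bigl(i^3n^{c-3}\bigr).
\]
For $N\gtrsim k^{3/(3-c)}$ one has $|E_n(i)|<1/(2m)$ for every $i<k$, so, outside a controlled set of boundary configurations, the subword $w_n$ of length $k$ starting at $n$ is determined by the triple
\[
(\gamma_0,\gamma_1,\gamma_2):=\Bigl(\{n^c/m\},\{cn^{c-1}/m\},\bigl\{c(c-1)n^{c-2}/(2m)\bigr\}\Bigr)\in[0,1)^3,
\]
via the formula $w_n(i)=\lfloor m\{\gamma_0+i\gamma_1+i^2\gamma_2\}\rfloor$. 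Distinct subwords then correspond to distinct cells of the hyperplane arrangement $\mathcal A_k$ in $[0,1)^3$ cut out by the planes $\gamma_0+i\gamma_1+i^2\gamma_2\in m^{-1}\Z$ for $0\le i<k$. Each $i$ contributes $O(mi^2)$ parallel planes, for $O(mk^3)$ in total; the Vandermonde structure of the normals $(1,i,i^2)$ keeps the number of three-plane intersections inside the cube bounded by $m^3\sum_{i_1<i_2<i_3}(i_2-i_1)(i_3-i_1)(i_3-i_2)=O(k^6)$. A standard face-counting argument (full cells inside $[0,1)^3$ carry a vertex; partial cells clipped by $\partial[0,1)^3$ are bounded by the $O(k^4)$ sub-regions of the restricted $2$-dimensional arrangements on the faces) then yields $O(k^6)$ cells. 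For positions $n<k^{3/(3-c)}$, the trivial bound contributes only $O(k^{3/(3-c)})=O(k^3)$ further subwords. When $c$ is close to $2$, the threshold $k^{3/(3-c)}$ is costly, and I would instead use first-order Taylor on the wider range $n\ge k^{2/(2-c)}$, working with only the two parameters $(\gamma_0,\gamma_1)$ but absorbing the $O(k^2n^{c-2})$-size quadratic remainder via a boundary-strip count that yields an alternative bound $O(k^{4/(2-c)+\varepsilon})$. Combining the two estimates produces $L_k\le C_1k^r$ for every $r>\max\{4/(2-c),6\}$.

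\emph{Lower bound.} I would exhibit $\Omega(k^3)$ pairwise distinct subwords constructively, inside the same arrangement $\mathcal A_k$. Choose three indices $0\le i_1<i_2<i_3<k$ with $i_j\asymp k$ and successive gaps $\asymp k$; the normals $(1,i_j,i_j^2)$ are linearly independent with Vandermonde determinant of order $k^3$, so the three corresponding pencils of parallel planes intersect triplewise in $\asymp m^3k^3$ vertices inside $[0,1)^3$ and thereby delimit $\Omega(k^3)$ full-dimensional cells. It then suffices to show that each such cell is visited by the triple $(\gamma_0(n),\gamma_1(n),\gamma_2(n))$ for some sufficiently large $n$; this follows from equidistribution of this triple on the $3$-torus, itself a consequence of Weyl's criterion combined with van der Corput bounds on the exponential sums $\sum_{n\le N}e(a_0n^c+a_1n^{c-1}+a_2n^{c-2})$ for every nonzero $(a_0,a_1,a_2)\in\Z^3$ and $1<c<2$. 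Each visited cell yields at least one distinct length-$k$ subword, giving $L_k\ge C_2k^3$.

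The main difficulty is on the upper-bound side and, more specifically, the treatment of the boundary strips of $\mathcal A_k$: close to any of the $O(mk^3)$ planes, the Taylor remainder $E_n(i)$ may flip one or more letters of the subword, and handled na\"ively this would inflate the cell count by a factor up to $3^k$. The proof must absorb these flips either by pushing the dyadic threshold $N$ large enough that the total ``bad volume'' in $[0,1)^3$ remains bounded, or by bookkeeping the flips together with a separate small-$n$ contribution. The resulting trade-off between the small-$n$ loss $k^{3/(3-c)}$, the three-parameter cell-count loss $k^6$, and the boundary loss $k^{4/(2-c)}$ appearing when $c$ is close to $2$ is precisely what dictates the exponent $\max\{4/(2-c),6\}$ in the theorem.
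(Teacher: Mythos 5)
Your upper-bound route --- second-order Taylor expansion of $x^c$ and cell counting in a hyperplane arrangement in $[0,1)^3$ --- is genuinely different from the paper's. The paper works with a first-order approximation $n^c = n\alpha+\beta+\theta_n$, with $0\le\theta_n<\varepsilon=1/(4L^2)$ on a window of length $L$ starting at $N\gtrsim L^{4/(2-c)}$; the $O(L^3)$ count of Beatty blocks $\bigl(\lfloor n\alpha+\beta\rfloor\bmod m\bigr)$ comes from Mignosi's bound on Sturmian factors, and the threshold $N\gtrsim L^{4/(2-c)}$ is precisely what produces the exponent $4/(2-c)$ in the final bound. Your lower-bound route (count full cells in a three-pencil sub-arrangement, then invoke equidistribution of the triple $(\{n^c/m\},\{cn^{c-1}/m\},\{\tbinom c2 n^{c-2}/m\})$ on $\mathbb T^3$) is likewise different from the paper's, which instead uses Mignosi's $\Omega(k^3)$ lower bound on Sturmian factors together with uniform recurrence and a single linear approximation; your route is plausible, at the cost of proving the needed Weyl/van der Corput bound for the three-parameter exponential sum.

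The genuine gap is exactly where you locate "the main difficulty," and neither of your two suggested remedies resolves it. The letters flipped by the Taylor remainder near cell boundaries are not absorbed by pushing $N$ higher: the boundary strips get thinner, but by equidistribution $(\gamma_0(n),\gamma_1(n),\gamma_2(n))$ lands in them for infinitely many $n$, each of which can yield a \emph{new} subword, so controlling the ``bad volume'' says nothing about the number of distinct subwords produced. The "bookkeeping" route is where the whole theorem lives. In the paper's linear setting the remainder $\theta_n$ is one-signed and tiny, so a letter is flipped precisely when $\{n\alpha+\beta\}\in[1-\varepsilon,1)$; the three gaps theorem then shows that within any window the set of potentially-flipped positions is either a single point or a full arithmetic progression (and rules out a third case), and convexity of $\theta_n$ shows the actually-flipped positions form a contiguous sub-progression. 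This bounds the number of flip patterns by $O(L^3)$ and gives the $O(L^6)$ total. For your quadratic approximation there is no three-gaps analogue: the set of $i<k$ with $\{\gamma_0+i\gamma_1+i^2\gamma_2\}$ near a multiple of $1/m$ has no arithmetic-progression structure, and you give no replacement mechanism to keep the flip-pattern count polynomial.

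A smaller inconsistency: you propose to "combine" the two estimates $O(k^6)$ and $O(k^{4/(2-c)+\varepsilon})$ into $\max\{4/(2-c),6\}$, but two valid upper bounds combine to the \emph{smaller}, not the larger. In the paper the maximum arises because both contributions --- the trivial count of start positions below the approximation threshold, and the Sturmian-plus-flips count above it --- appear additively inside one and the same decomposition.
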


It is a famous conjecture by Sarnak \cite{Sarnak}
that every 
bounded
deterministic sequence $(u_n)_n$ is
asymptotically orthogonal to the M\"obius function $\mu$, which means that one has
\[
\sum_{n< N} \mu(n)\, u_n = o(N), \qquad (N\to\infty).
\]
This is true in the case when $u_n = \lfloor n^c \rfloor \bmod m$. We even have the following result
\begin{theorem}\label{thm_sarnak}
Suppose that $c>1$ is not an integer and that $m\ge 2$ is an integer. 
Let $G$ be a complex valued function defined on $\{0,1,\ldots, m-1\}$. 
Then, for every multiplicative
function $f(n)$ with $|f(n)|\le 1$ and the property 
\[
\sum_{n< N} f(n)= o(N), \qquad (N\to\infty),
\]
we have
\[
\sum_{n< N} f(n) G \left( \lfloor n^c \rfloor \bmod m \right) = o(N), \qquad (N\to\infty).
\]
\end{theorem}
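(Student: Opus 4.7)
My plan is to apply the Daboussi--K\'{a}tai orthogonality criterion, which states that a bounded sequence $a(n)$ is asymptotically orthogonal to every multiplicative function $f$ with $\lvert f\rvert\le 1$ and mean value zero, provided that for every pair of distinct primes $p\ne q$
\[
\lim_{N\to\infty}\frac1N\sum_{n\le N}a(pn)\overline{a(qn)}=0.
\]
Setting $a(n)=G(\floor{n^c}\bmod m)$, I would first write $G=G_0+\bar G$, where $\bar G=m^{-1}\sum_{r<m}G(r)$ and $G_0$ has mean zero. The contribution of the constant part to the target sum is $\bar G\sum_{n<N}f(n)=o(N)$ by the hypothesis on $f$, so it suffices to establish the correlation condition above for $a_0(n)=G_0(\floor{n^c}\bmod m)$.

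A convenient reformulation is $a_0(n)=H(\{n^c/m\})$, where $H\colon[0,1)\to\mathbb{C}$ is the step function equal to $G_0(r)$ on $[r/m,(r+1)/m)$ and $\{\cdot\}$ denotes the fractional part; since $G_0$ has mean zero, so does $H$. Expanding $H(y)=\sum_{h\ne 0}c_h\,e(hy)$ with Fourier coefficients satisfying $\lvert c_h\rvert\ll 1/\lvert h\rvert$, and truncating at $\lvert h\rvert\le K$ with tail error $O(N/K)$, the correlation sum reduces to
\[
\sum_{0<\lvert h_1\rvert,\lvert h_2\rvert\le K}c_{h_1}\overline{c_{h_2}}\sum_{n\le N}e\!\left(\beta_{h_1,h_2}\,n^c\right)+O(N/K),\qquad \beta_{h_1,h_2}=\frac{h_1p^c-h_2q^c}{m}.
\]

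I would next verify that $\beta_{h_1,h_2}\ne 0$ for every $(h_1,h_2)\ne(0,0)$: the alternative $(q/p)^c=h_1/h_2\in\mathbb{Q}$ forces $c\in\Z$, by a $p$-adic valuation argument on the identity $q^a s^b=r^b p^a$ when $c$ is rational, and by Gelfond--Schneider applied to $\log(h_1/h_2)/\log(q/p)$ when $c$ is algebraic irrational. With $\beta\ne 0$ secured, van der Corput's $k$-th derivative test applied to $f(x)=\beta x^c$ with $k=\lceil c\rceil$---so that $f^{(k)}(x)\asymp\lvert\beta\rvert x^{c-k}$ with $c-k\in(-1,0)$---yields $\sum_{n\le N}e(\beta n^c)\ll N^{1-\delta(c)}$ for some positive $\delta(c)$. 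Summing these estimates over the $O(K^2)$ Fourier pairs, using $\lvert c_{h_1}c_{h_2}\rvert\ll(\lvert h_1 h_2\rvert)^{-1}$, and choosing $K$ as a suitable small power of $N$ gives the correlation bound $o(N)$, and the criterion then delivers Theorem \ref{thm_sarnak}.

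The main obstacle is the uniform control of $\lvert\beta_{h_1,h_2}\rvert^{-1}$ as $(h_1,h_2)$ ranges over the truncated Fourier support: when $h_1/h_2$ approximates $(q/p)^c$ closely, $\beta_{h_1,h_2}$ becomes small and the implicit constant in the van der Corput bound degrades accordingly. Handling this requires a quantitative irrationality or transcendence measure for $(q/p)^c$ (giving $\lvert\beta_{h_1,h_2}\rvert\gg K^{-A}$ for some $A=A(c)$) and a careful balancing of $K$ against the van der Corput savings $\delta(c)$, so that all error terms stay uniformly $o(N)$. For the genuinely exceptional transcendental values of $c$ for which $(q/p)^c$ happens to be rational for some pair of primes, one replaces the Daboussi--K\'{a}tai criterion by the Bourgain--Sarnak--Ziegler variant, restricting attention to the density-one set of prime pairs for which no such coincidence occurs.
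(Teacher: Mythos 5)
Your high-level architecture matches the paper's: both reduce the orthogonality claim to a Daboussi--K\'atai correlation condition on $a(n)=G(\lfloor n^c\rfloor\bmod m)$, expand the periodic step function in a finite Fourier series, and estimate the resulting exponential sums $\sum_{n\le N}e(\beta\,n^c)$ by van der Corput, with the crux being that the frequencies $\beta$ are nonzero. You also correctly identify the two danger points (the possible vanishing or near-vanishing of $\beta_{h_1,h_2}$, and the arithmetic nature of $(q/p)^c$). However, the way you propose to resolve each is where the proposal runs into genuine trouble.

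First, your plan to choose the Fourier truncation $K$ as a growing power of $N$ and then compensate the loss from small $\lvert\beta_{h_1,h_2}\rvert$ via ``a quantitative irrationality or transcendence measure for $(q/p)^c$'' giving $\lvert\beta\rvert\gg K^{-A}$ is not available: for a general non-integral $c$ (in particular transcendental $c$), no such effective measure is known, and proving one would be a serious transcendence-theoretic result in its own right. The paper avoids this entirely by a purely qualitative ordering of limits: fix an arbitrary (large but \emph{fixed}) truncation parameter $H$, observe that $\min_{\lvert h_1\rvert,\lvert h_2\rvert\le H^3}\lvert(\alpha+h_1)p^c+(-\alpha+h_2)q^c\rvert$ is a positive constant $c(H)>0$, apply the van der Corput estimate (Lemma~\ref{lem42}) uniformly in this minimum, send $N\to\infty$ to get $\limsup_N\lvert S\rvert/N\ll 1/H$, and only then let $H\to\infty$. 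You should abandon the attempt to couple $K$ to $N$ and adopt this fix-$H$-first scheme; it makes the quantitative lower bound on $\lvert\beta\rvert$ unnecessary.

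Second, your case analysis on the arithmetic nature of $c$ (rational via $p$-adic valuations, algebraic irrational via Gelfond--Schneider, transcendental via a Bourgain--Sarnak--Ziegler escape to a density-one set of prime pairs) is both more complicated and incomplete: for transcendental $c$ you neither show that the set of bad prime pairs has density zero nor that it is finite, which is what the K\'atai criterion actually wants. The paper's Lemma~\ref{lem41} dispatches all three cases at once with the Six Exponential Theorem of Lang and Ramachandra, showing that there are \emph{at most two} pairs of primes $(p,q)$ with $(p/q)^c\in\mathbb{Q}$; hence the correlation condition holds for all sufficiently large prime pairs, which is exactly what the criterion requires. (A minor further point: truncating the Fourier series of a genuine step function $H$ with tail error $O(N/K)$ is delicate at the jumps; the paper uses Vaaler's two-sided trigonometric approximation of indicator functions precisely to control this rigorously.) With the Six Exponential Theorem in place of your case split and the fix-$H$-then-$N\to\infty$ limiting argument in place of your quantitative truncation, your proposal would become essentially the paper's proof.
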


In Section 3, we define some more notation and study the trigonometric sums and discrepancies relevant for our questions. Theorems 1, 2, 3 and 4 are respectivilely proved in the four subsequent sections.

\section{Notation, trigonometric sums, discrepancy}

\subsection{Notation}
For a real number $u$, we let $\e(u) = \exp(2\pi i u)$.

For a real number $c$ we use Knuth's notation for the falling factorials defined recursively by $c^{\underline{0}} = 1$ and $c^{\underline{k}} = c^{\underline{k-1}}(c-k+1)$.

For $k$ a positive integer and $h = (h_1, h_2, \ldots, h_k) \in \mathbb{R}^k$, we let $\|h\|_{\infty} = \max \{|h_1|, \ldots, |h_k|\}$.

For a real number $x$ we let $x = \lfloor x \rfloor + \{x\}$ be its only decomposition as a sum of an integer and an element in $[0, 1)$. We notice that the map $x \mapsto \{x\}$ permits to identify $\mathbb{T} = \mathbb{R} / \mathbb{Z}$ and the interval $[0, 1)$. We let $\|x\| = \min\{\{x\}, 1-\{x\}\}$ be the so-called distance of $x$ to {the} nearest ineger. For a positive integer $k$ we identify  $\mathbb{T} ^k$ and $[0, 1)^k$. An \emph{interval} $I$ in $[0, 1)^k$ is a cartesian product $\prod_{1 \le i \le k} [a_i, b_i)$, with $0 \le a_i \le b_i \le 1$ for $1 \le i \le k$; its Lebesgue measure $\prod_{1 \le i \le k} (b_i-a_i)$ is denoted by $\lambda(I)$. For an interval $I \subset [0, 1)^k$, we denote by $\chi_I$ its indicator (also called characteristic) function.

The \emph{discrepancy} of a finite set $X = \{x_1, x_2, \ldots, x_N\}$ of elements of $\mathbb{R}^k$ is defined by
\begin{equation}\label{defdiscr}
D_N(X) = D_N(x_1, x_2, \ldots, x_N) = \sup_{I \subset [0, 1)^k} 
\abs{ \frac{1}{N} \sum_{n=1}^N \chi_I(\{x_n\}) - \lambda(I)}.
\end{equation}

\subsection{Trigonometric sums over polynomials}
\begin{proposition}\label{lem_estimate}
Let $k \ge 1$ and $P(n) = \sum_{i=0}^k \alpha_in^{i}$ be a polynomial of degree $k$ with real coefficients. Let $q, R, h$ be positive integers and $p$ an integer such that
\begin{equation}\label{alphak}
\gcd(p, q) = 1, \;\abs{ \alpha_k - \frac{p}{q}} \le \frac{1}{q^2} \; \text{ and }\; 2hk!R^{2-1/2^{k-2}} \le q.
\end{equation}
For $N \ge R$ we have
\begin{equation}\label{eqnlem_estimate}
\frac{1}{N}\abs{\sum_{1 \le n \le N}\e(hP(n))} \ll_{_k} \left(\frac{1}{R} + \frac{q}{N}\right)^{1/2^{k-1}} .
\end{equation}
\end{proposition}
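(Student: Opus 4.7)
My plan is to apply Weyl's differencing procedure $k-1$ times, which is the classical approach for exponential sums with polynomial phase. Writing $S = \sum_{1 \le n \le N} \e(hP(n))$, each application of the identity $|T|^{2} \le N \sum_{|d|<N}\sum_{n} \e(\phi(n+d)-\phi(n))$ combined with the Cauchy--Schwarz inequality reduces the degree of the phase by one at the cost of squaring $|T|$. Iterating $k-1$ times, I would obtain
\begin{equation*}
|S|^{2^{k-1}} \ll_{k} N^{2^{k-1}-k}\sum_{|d_1|<N}\cdots\sum_{|d_{k-1}|<N}\left|\sum_{n \in I(d_1,\ldots,d_{k-1})}\e\bigl(h\,k!\,\alpha_k\, d_1\cdots d_{k-1}\cdot n + \beta\bigr)\right|,
\end{equation*}
where the innermost phase is linear in $n$ with leading coefficient $hk!\alpha_k d_1\cdots d_{k-1}$, the constant $\beta = \beta(d_1,\ldots,d_{k-1})$ disappears under the modulus, and $I(d_1,\ldots,d_{k-1}) \subset [1,N]$.

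Next I would bound each innermost geometric sum by the standard estimate $\min\bigl(N,\, 1/(2\|hk!\alpha_k d_1\cdots d_{k-1}\|)\bigr)$ and reduce the multiple sum over $d_1,\ldots,d_{k-1}$ to a one-dimensional sum by grouping the tuples according to the value of the product $m = d_1\cdots d_{k-1}$. This reduces the problem to estimating
\begin{equation*}
\Sigma = \sum_{|m|\le N^{k-1}}\min\!\left(N,\, \frac{1}{\|hk!\alpha_k m\|}\right),
\end{equation*}
for which a classical Vinogradov-type counting lemma, exploiting the Dirichlet approximation of the coefficient of $m$, yields a bound of the order $(M/q + 1)(N + q\log q)$ with $M = N^{k-1}$ --- provided the approximation survives the multiplication by $hk!$.

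The main obstacle, and precisely the reason for the technical hypothesis $2hk!R^{2-1/2^{k-2}} \le q$, is this last transfer of approximation: from $|\alpha_k - p/q|\le 1/q^2$ one only directly obtains $|hk!\alpha_k - hk!p/q|\le hk!/q^2$, so one must ensure that the denominator of the best rational approximation to $hk!\alpha_k$, namely $q/\gcd(hk!,q)$, remains large enough compared with the effective summation range (itself calibrated by the truncation parameter $R$), and that the corresponding numerator is coprime to this denominator --- which is the role of the hypothesis $\gcd(p,q)=1$. The precise exponent $2-1/2^{k-2}$ emerges from balancing, at the end of the $k-1$ iterated differencing steps, the diagonal contribution to $\Sigma$ (which scales like $q/N$) against the minor-arc contribution (which scales like $1/R$); raising the combined estimate to the power $1/2^{k-1}$ then produces the announced bound, with the $k$-dependent factors of $2$, $k$, $k!$ absorbed into the implicit constant of $\ll_{k}$. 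The bulk of the work is the bookkeeping needed to verify that, under the given calibration, no term of Vinogradov's lemma escapes the two-term envelope $(1/R + q/N)^{1/2^{k-1}}$.
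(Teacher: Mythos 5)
Your outline follows the classical Weyl--Vinogradov route (full-range differencing $\lvert d_i\rvert<N$, reduce to $\sum_{\lvert m\rvert\le N^{k-1}}\min(N,\lVert hk!\alpha_k m\rVert^{-1})$, then a Vinogradov-type counting lemma). The paper does something both simpler and structurally different, and the difference matters here.

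The paper applies the \emph{truncated} Weyl--van der Corput inequality (Lemma~2.7 of Graham--Kolesnik) with differencing ranges $R_j=R^{1/2^{k-1-j}}$, so the variables $r_j$ run only up to $R_j$, not up to $N$. The $1/R$ term in the final bound comes directly from that lemma; it is not something you can extract a posteriori from a full-range Weyl step. More importantly, the truncation removes the need for Vinogradov counting altogether: with $R_1\cdots R_{k-1}=R^{2-1/2^{k-2}}$, the hypothesis $2hk!R^{2-1/2^{k-2}}\le q$ forces $\ell:=hk!\,r_1\cdots r_{k-1}\le q/2<q$ for \emph{every} tuple in range, so $\ell p\not\equiv 0\pmod q$ and $\lvert\ell(\alpha_k-p/q)\rvert\le 1/(2q)$, whence $\lvert\sin(\pi\alpha_k\ell)\rvert\ge 1/q$ and each inner geometric sum is $\le q$. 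Summing over the $R_1\cdots R_{k-1}$ tuples and dividing produces exactly $1/R+q/N$ inside the $2^{k-1}$-th root. No dichotomy over the size of $\lVert hk!\alpha_k m\rVert$ is needed, and the delicate question you raise --- what becomes of the rational approximation after multiplying by $hk!$ --- never arises.

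This is where your proposal has a real gap, not just a bookkeeping burden. With $\lvert m\rvert\le N^{k-1}$ you \emph{do} hit multiples of the reduced denominator $q'=q/\gcd(hk!,q)$, so the counting lemma gives something of the shape $(N^{k-1}/q'+1)(N+q'\log q')$; after normalising and taking the $2^{k-1}$-th root this yields the familiar Weyl-inequality terms $1/q'$, $1/N$, $q'/N^{k}$ (up to logarithms), and there is no mechanism that produces a $1/R$ term, nor a clean $q/N$. The hypothesis on $R$ cannot ``calibrate'' anything in your setup because $R$ does not appear in it; $R$ only enters through the truncation of the differencing ranges, which you have discarded. Your final paragraph asserts that the diagonal scales like $q/N$ and the minor-arc part like $1/R$, but in the full-range scheme the diagonal $d_1=\dots=d_{k-1}=0$ contributes $1/N^{k-1}$, and nothing scales like $1/R$. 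To repair the argument you would have to replace the first step by the truncated van der Corput lemma (or, equivalently, Weyl differencing over shifts bounded by $R_j$), at which point the proof collapses to the paper's two-line geometric-sum estimate and the Vinogradov counting machinery becomes unnecessary.
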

\begin{proof}
Our first step is to use, for $k \ge 2$, the Weyl-van der Corput method to reduce the evaluation of the left hand side of (\ref{eqnlem_estimate}) to the evalutation of geometric sums. We apply Lemma 2.7 of \cite{GK1991} with
\begin{equation}\label{prep2-7}
q=k-1, \; Q = 2^{k-1}, \; I=(0, N], \; H_j = R_j = R^{1/2^{k-1-j}} \text{ for } 1 \le j \le k-1,
\end{equation}
notice that the condition $R \le N = \abs{I}$ is fulfilled and get
\begin{equation}\label{reducGsum}
\frac{1}{N}\abs{\sum_{1 \le n \le N}\e(hP(n))} \ll \left(\frac{1}{R} + \frac{1}{NR_1\cdots R_{k-1}}      \sum_{\substack{1 \le r_1 \le R_1\\ \ldots\ldots \\ 1 \le r_{k-1} \le R_{k-1}}}\abs{\sum_ {n=1}^{N-r_1-\cdots-r_{k-1}}\e(\alpha_k hk! r_1 \cdots r_{k-1}n)}\right)^{1/2^{k-1}}.
\end{equation}
Let now $\ell = h k! r_1\cdots r_{k-1}$; for any $M$ we have
$$
\abs{\sum_{n=1}^M \e(\alpha_k \ell n)}  \le \frac{2}{\abs{\e(\alpha_k \ell) - 1}} = \frac{1}{\abs{\sin(\pi\alpha_k \ell)}}. 
$$
We are thus looking for a lower bound for $\abs{\sin(\pi\alpha_k \ell)}$. We  have
$$
\abs{\sin(\pi\alpha_k \ell)} = \abs{\sin\left(\pi\frac{\ell p}{q} + \pi \ell\left(\alpha_k - \frac{p}{q}\right)\right)}.
$$
Relation (\ref{alphak}) implies $1\le  \ell \le q-1$ (in particular $q \ge 2$) and $\gcd(p, q)=1$: the number $\ell p$ is never $0$ modulo $q$. On the other hand, we have
$$
\abs{\ell\left(\alpha_k - \frac{p}{q}\right)} \le \frac{\ell}{q^2} \le \frac{1}{2q}.
$$
The last two relations imply
$$
\abs{\sin(\pi\alpha_k \ell)} \ge \sin\left(\frac{\pi}{2q}\right) \ge \frac{1}{q} 
$$
and so for any $M$ one has  $\abs{\sum_{n=1}^M \e(\alpha_k \ell n)}  \le q$. This easily implies the validity of Proposition \ref{lem_estimate} when $k \ge 2$.\\
The case when $k=1$ is now straightforward. We have
$$
\frac{1}{N}\abs{\sum_{1 \le n \le N}\e(hP(n))} =\frac{1}{N}\abs{\sum_{1 \le n \le N}\e(\alpha_k h n)} .
$$
Relation (\ref{alphak}) implies $2h \le q$ and the previous reasoning implies
$$
 \frac{1}{N}\abs{\sum_{1 \le n \le N}\e(hP(n))} \ll \frac{q}{N},
$$
and (\ref{eqnlem_estimate}) is satisfied for any value of $R$.
\end{proof}

\subsection{Trigonometric sums involving the function $n^c$ at consecutive arguments}
\begin{proposition}\label{lem_3}
Let $c>1$ be a non integral real number, let $L=\lfloor c \rfloor +1$ and let $m$ be a positive integer. For any $L$-tuple $h = (h_0, h_1, \ldots, h_{L-1})$ of integers which are not all $0$ and any positive integer $N$, we have
\begin{equation}\notag
\sum_{n=N}^{2N-1} \e\left(\frac{1}{m}\sum_{\ell=0}^{L-1} h_{\ell}(n + \ell)^c\right) \ll_{L, c, m} \|h\|_{\infty} \, N^{1-\frac{\|c\|}{2^{(c+1)}}},
\end{equation}
as soon as
\begin{equation}\notag
 \|h\|_{\infty}  =o\left(N^{1-\{c\}}\right).
\end{equation}
\end{proposition}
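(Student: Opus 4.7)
The plan is to adapt the iterated Weyl--van der Corput argument used in the proof of Proposition~\ref{lem_estimate} to the smooth (non-polynomial) phase $\Phi(n) := \frac{1}{m}\sum_{\ell=0}^{L-1} h_\ell (n+\ell)^c$. Writing $S := \sum_{n=N}^{2N-1}\e(\Phi(n))$, I would apply Lemma 2.7 of~\cite{GK1991} exactly $L-1$ times, with the same choice of parameters $R_j = R^{1/2^{L-1-j}}$ as in the proof of Proposition~\ref{lem_estimate}, to obtain
\begin{equation*}
\frac{|S|}{N} \ll \left(\frac{1}{R} + \frac{1}{N R_1\cdots R_{L-1}} \sum_{\substack{1\le r_j\le R_j\\1\le j\le L-1}} \biggl|\sum_{n}\e\bigl(\Delta_{r_1}\cdots\Delta_{r_{L-1}}\Phi(n)\bigr)\biggr|\right)^{\!1/2^{L-1}}.
\end{equation*}
By Taylor's formula the iterated finite difference equals $r_1\cdots r_{L-1}\,\Phi^{(L-1)}(n)$ up to an error of order $r_1\cdots r_{L-1}R\,\|h\|_\infty N^{\{c\}-1}/m$; the hypothesis $\|h\|_\infty = o(N^{1-\{c\}})$ lets me choose $R$ as a small positive power of $N$ for which this error contributes $o(1)$ to the phase, so it can be discarded.

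Since $\Phi^{(L-1)}(n) = (c^{\underline{L-1}}/m)\sum_\ell h_\ell (n+\ell)^{\{c\}}$, after absorbing lower-order corrections via the further expansion $(n+\ell)^{\{c\}} = n^{\{c\}} + O(\ell\,n^{\{c\}-1})$, the inner sum is essentially of the form $\sum_n \e(\gamma n^{\{c\}})$ with $\gamma \asymp r_1\cdots r_{L-1}A_0/m$ and $A_0 := \sum_\ell h_\ell$. I would then bound this using van der Corput's second-derivative test: the second derivative of $\gamma n^{\{c\}}$ is of order $\gamma N^{\{c\}-2}$, yielding $|\sum_n\e(\gamma n^{\{c\}})| \ll N(\gamma N^{\{c\}-2})^{1/2} + (\gamma N^{\{c\}-2})^{-1/2}$. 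Substituting this back, averaging over the $r_j$, and optimising $R$ against $\|c\|$, I expect the target exponent $1-\|c\|/2^{c+1}$ to emerge together with a linear dependence on $\|h\|_\infty$.

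The main obstacle will be the degenerate situation $A_0 = 0$, in which the leading contribution to $\Phi^{(L-1)}$ vanishes and one must work with the first nonzero $A_{j_0} := \sum_\ell h_\ell\ell^{j_0}$; such a $j_0 \in \{0,1,\ldots,L-1\}$ exists because the map $(h_\ell)_{0\le\ell<L}\mapsto(A_j)_{0\le j<L}$ is a Vandermonde-type bijection and $(h_\ell)$ is nontrivial by assumption. The effective leading power of the inner sum is then $n^{\{c\}-j_0}$ with a coefficient smaller by a factor $N^{-j_0}$; one has to verify that the weaker oscillation of this lower-order contribution still reaches the exponent $1-\|c\|/2^{c+1}$ after averaging, which should follow from the slack in the inequalities $\|c\|\le 1-\{c\}$ and $2^L\le 2^{c+1}$.
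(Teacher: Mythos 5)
Your outline has a genuine gap that is not repaired by the final paragraph, and it concerns exactly the degenerate case you flag. You propose to apply the Weyl--van der Corput differencing Lemma~2.7 of~\cite{GK1991} exactly $L-1$ times regardless of the structure of $h$, landing on a phase $\approx r_1\cdots r_{L-1}\Phi^{(L-1)}(n)$. But when the first nonvanishing moment $A_{j_0}=\sum_\ell h_\ell\ell^{j_0}$ has $j_0\ge 1$, one has $\Phi^{(L-1)}(n)\asymp A_{j_0}N^{\{c\}-j_0}/m$, so the phase of the inner sum, and a fortiori its second derivative $\asymp r_1\cdots r_{L-1}A_{j_0}N^{\{c\}-j_0-2}/m$, can be far smaller than $N^{-2}$. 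Concretely, with $R=N^\theta$ for small $\theta$ one gets $r_1\cdots r_{L-1}\le R^{2-1/2^{L-2}}\ll N^{2\theta}$, and since $j_0-\{c\}>0$ whenever $j_0\ge 1$, the inequality $r_1\cdots r_{L-1}|A_{j_0}|N^{\{c\}-j_0-2}\ll N^{-2}$ holds for \emph{every} admissible $(r_1,\ldots,r_{L-1})$ once $N$ is large; the $(L-1)$-fold differenced function is then essentially constant, there is no oscillation left to exploit, the inner sums are $\asymp N$, and the whole scheme returns only the trivial bound. The difficulty is not a matter of losing slack in $\|c\|\le 1-\{c\}$ or $2^L\le 2^{c+1}$; applying $L-1$ differences \emph{over-differences} the phase when $j_0\ge 1$, and no amount of averaging over the $r_j$ recovers a power saving. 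The paper's proof avoids this by never fixing the number of differences a priori: it locates the smallest $r$ with $\sum_\ell h_\ell\ell^r\ne 0$ via the Vandermonde argument (the one step you do share), observes that then $f^{(k)}\asymp N^{c-r-k}$, and chooses the derivative order so that the relevant derivative has size $\asymp N^{\{c\}-1}$ --- applying Theorem~2.8 of~\cite{GK1991} (a $k$th-derivative bound) when $0\le r\le L-3$, the second-derivative test when $r=L-2$, and the Kusmin--Landau first-derivative test when $r=L-1$. It is precisely this case where the hypothesis $\|h\|_\infty = o(N^{1-\{c\}})$ is used, to guarantee $f'=o(1)$ so that Kusmin--Landau applies; by contrast you use that hypothesis to control a Taylor error, which is a different (and, as a separate issue, insufficient) use of it, since the oscillation of the remainder $\Delta_{r_1}\cdots\Delta_{r_{L-1}}\Phi - r_1\cdots r_{L-1}\Phi^{(L-1)}$ over $[N,2N)$ is $\ll R^3\|h\|_\infty N^{\{c\}-1}$, which is $o(R^3)$ but not $o(1)$ once $R$ grows like a power of $N$, as it must for a power saving. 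To repair your approach you would at minimum need to apply only $L-1-j_0$ differences in the degenerate case (so that $\Phi^{(L-1-j_0)}\asymp A_{j_0}N^{\{c\}}$) and redo the error analysis by estimating the second derivative of the iterated difference directly rather than discarding a Taylor remainder; at that point you are essentially re-deriving Theorem~2.8 of~\cite{GK1991}, which the paper instead invokes as a black box.
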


\begin{proof}
In order to apply classical upper bounds for trigonometrical sums, we need to have a lower and an upper bound for the absolute value of some derivative of the function $f$ defined by $f(x)=  \frac{1}{m}\sum_{\ell=0}^{L-1} h_{\ell}(x + \ell)^c$.\\
\indent If $\frac{1}{m}\sum_{\ell=0}^{L-1} h_{\ell} \neq 0$, then for $x \in [N, 2N-1]$ and any integer $k \le c+1$ we have
\begin{equation}\notag
N^{c-k} \ll_{L, c, m} \mid f^{(k)}(x) \mid \ll_{L, c , m} \|h\|_{\infty} N^{c-k},
\end{equation}
which is fine for our purpose.\\
\indent But if $\sum_{\ell=0}^{L-1} h_{\ell} = 0$, the order of magnitude of $f^{(k)}$ is no longer $N^{c-k}$. In that case, we use the Taylor expansion for $(x+\ell)^c$; the next term is now $\frac{c}{m}\sum_{\ell=0}^{L-1} h_{\ell} \ell (x + \ell)^{c-1}$. If $ \sum_{\ell=0}^{L-1} h_{\ell} \ell \neq 0$, we have $N^{c-1-k} \ll_{L, c, m} \mid f^{(k)}(x) \mid \ll_{L, c, m}\|h\|_{\infty} N^{c-1-k}$ and we are done ; if $ \sum_{\ell=0}^{L-1} h_{\ell} \ell =0$, we go to the next term in the Taylor expansion, and so on...\\
\indent However, since the vector $h$ is non zero, there exists $r \in [0, L-1]$ such that
\begin{equation}\label{rOK}
\sum_{\ell=0}^{L-1} h_{\ell} \ell^r \neq 0:
\end{equation}
if it were not the case, we would have $Ah=0$, where $A = \left(j^{i}\right)_{0 \le i, j \le L-1}$  is the transposed matrix of an invertible Vandermonde matrix and $h$ a non zero vector, which is not possible.\\

Let $r$ be the smallest non-negative integer for which (\ref{rOK}) holds. For $k \le c + 1$, we have
\begin{equation}\label{deriv}
N^{c-r-k} \ll_{L, c, m} \mid f^{(k)}(x) \mid \ll_{L, c , m} \|h\|_{\infty} N^{c-r-k}.
\end{equation}

Let us first assume that $0 \le r \le L-3$, a case which may occur only when $c>2$. We let
$$
q = \lfloor c \rfloor - r - 1.
$$
We notice that 
$$q \ge L -1- (L-3)-1 = 1$$ and that 
$$c-r-(q+2) = c - r - \lfloor c \rfloor +r +1 - 2 = c - \lfloor c \rfloor - 1 = \{c\}-1.$$
For $x \in [N, 2N-1]$, we have
$$
N^{\{c\}-1}  \ll_{L, c, m}    \mid f^{(q+2)}(x) \mid    \ll_{L, c, m}  \|h\|_{\infty} \, N^{\{c\}-1}.
$$
We let 
$$
\lambda = \min_{x \in [N, 2N-1]}  \mid f^{(q+2)}(x) \mid \;  \text{ and }\;  \alpha \lambda = \max_{x \in [N, 2N-1]}  \mid f^{(q+2)}(x) \mid.
$$
The previous double inequality implies that there are constants $\kappa_1$ and $\kappa_2$ depending at most on $L, c, m$ such that
$$
\lambda = \kappa_1 N^{\{c\}-1}   \; \text{ and } \; 1 \le \alpha \le \kappa_2 \|h\|_{\infty}. 
$$
Theorem 2.8 of \cite{GK1991} implies that we have, with $Q = 2^q$,
\begin{eqnarray}\notag
\sum_{n=N}^{2N-1} \e\left(\frac{1}{m}\sum_{\ell=0}^{L-1} h_{\ell}(n + \ell)^c\right) & \ll_{L, c, m} & N\left(\|h\|_{\infty}^2 N^{\{c\}-1}\right)^{1/(4Q-2)} + \\
\notag & & N^{1-1/2Q}\|h\|_{\infty}^{1/2Q} + N^{1-2/Q+1/Q^2}N^{(\{c\}-1)/2Q}.
\end{eqnarray}
Using the inequalities
$$
1 \le Q \le 2Q \le 4Q-2 \le 4\times 2^{c-1}-2 \le 2^{c+1}-2 \; \text{ and }\; \{c\}-1 \ge -\|c\|,
$$
one obtains Proposition \ref{lem_3}.\\

Let us now assume that $r = L-2 = \lfloor c \rfloor -1$. In this case, we have $c-r-2=c-(\lfloor c \rfloor -1)-2 = \{c\} -1$, and, thanks to (\ref{deriv}), for $x \in [N, 2N-1]$, we have
$$
N^{\{c\}-1}  \ll_{L, c, m}    \mid f^{(2)}(x) \mid    \ll_{L, c, m}  \|h\|_{\infty} \, N^{\{c\}-1}.
$$
We let 
$$
\lambda = \min_{x \in [N, 2N-1]}  \mid f^{(2)}(x) \mid \;  \text{ and }\;  \alpha \lambda = \max_{x \in [N, 2N-1]}  \mid f^{(2)}(x) \mid.
$$
The previous double inequality implies that there are constants $\kappa_1$ and $\kappa_2$ depending at most on $L, c, m$ such that
$$
\lambda = \kappa_1 N^{\{c\}-1}   \; \text{ and } \; 1 \le \alpha \le \kappa_2 \|h\|_{\infty}. 
$$
Theorem 2.2 of \cite{GK1991} implies that we have
\begin{equation}\notag
\sum_{n=N}^{2N-1} \e\left(\frac{1}{m}\sum_{\ell=0}^{L-1} h_{\ell}(n + \ell)^c\right)  \ll_{L, c, m}  N\|h\|_{\infty} N^{(\{c\}-1)/2}+  N^{(1-\{c\})/2},
\end{equation}
in which case, Proposition \ref{lem_3} is satisfied.\\

We now consider the last case, when $r=L-1$. In this case, we have $c-r-1=c-(\lfloor c \rfloor )-1=\{c\}-1$ and so 
$$
N^{\{c\}-1}\ll_{L, c, m} \mid f'(x)\mid  \ll_{L, c, m}  \|h\|_{\infty} \, N^{\{c\}-1}.
$$
Since, by hypothesis, the last term is $o(1)$, we can apply The Kusmin-Landau lemma (Theorem 2.1 of \cite{GK1991}) and obtain
\begin{equation}\notag
\sum_{n=N}^{2N-1} \e\left(\frac{1}{m}\sum_{\ell=0}^{L-1} h_{\ell}(n + \ell)^c\right)  \ll_{L, c, m}  N^{1-\{c\}},
\end{equation}
in which case, Proposition \ref{lem_3} is again satisfied.\\
\end{proof}

\subsection{Discrepancy of a perturbed sequence}

We will make use of the following elementary property
\begin{proposition}\label{perturbed}
Let $N \ge 1$ , $\delta \ge 0$ and $x_1, x_2, \ldots, x_N$ and $y_1, y_2, \ldots, y_N$ be two families of real numbers such that for all $n \le N$ we have $\mid y_n - x_n \mid \le \delta$. Then we have
$$
D_N(y_1, y_2, \ldots, y_N) \le 2 D_N(x_1, x_2, \ldots, x_N)  + 2 \delta.
$$
\end{proposition}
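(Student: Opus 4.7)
The plan is to sandwich, for each target interval $I = [a,b) \subset [0,1)$, the count $\frac1N\sum_n \chi_I(\{y_n\})$ between analogous counts for $(\{x_n\})$ against a shrunken and an enlarged version of $I$, and then invoke the definition of $D_N(X)$ applied to those intervals.

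Concretely, I would introduce the shrunken interval $I^- = [a+\delta, b-\delta)$ (empty if $b-a \le 2\delta$) and the enlarged set $I^+ \subset [0,1)$ obtained as the image of $[a-\delta, b+\delta) \subset \R$ under the fractional-part map $t \mapsto \{t\}$. Then $I^-$ is always a single interval of length $\ge \lambda(I) - 2\delta$, whereas $I^+$ is either a single interval of length $\le \lambda(I)+2\delta$ or, when $I$ is close enough to the identification $0 \equiv 1$, a disjoint union of two sub-intervals of $[0,1)$ whose total length is $\le \lambda(I)+2\delta$. A short case analysis based on the hypothesis $|y_n - x_n| \le \delta$ (examining which integer $k$ realizes $\{x_n\} = x_n - k$ and $\{y_n\} = y_n - k'$) yields the pointwise sandwich
\[
\chi_{I^-}(\{x_n\}) \;\le\; \chi_I(\{y_n\}) \;\le\; \chi_{I^+}(\{x_n\}).
\]

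From there I would apply the defining inequality of $D_N(X)$ separately to $I^-$ (one interval) and to each component of $I^+$ (at most two intervals), giving
\[
\lambda(I) - 2\delta - D_N(X) \;\le\; \frac1N\sum_{n=1}^N \chi_I(\{y_n\}) \;\le\; \lambda(I) + 2\delta + 2D_N(X).
\]
Subtracting $\lambda(I)$ and passing to the supremum over $I \subset [0,1)$ produces the claimed estimate $D_N(Y) \le 2 D_N(X) + 2\delta$. There is no genuine obstacle in this argument; the sole delicate point, and the reason for the factor $2$ in front of $D_N(X)$, is that the enlarged set $I^+$ may wrap around modulo $1$ and thereby split into two sub-intervals of $[0,1)$, forcing the discrepancy bound for $X$ to be invoked twice. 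The core $I^-$, in contrast, never wraps and contributes only a single such invocation.
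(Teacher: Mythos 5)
Your proposal is correct and coincides with the paper's own argument: both shrink $I$ to $I^-=[a+\delta,b-\delta)$ and enlarge it to the (possibly wrap-around) set $I^+$, exploit the pointwise sandwich $\chi_{I^-}(\{x_n\})\le\chi_I(\{y_n\})\le\chi_{I^+}(\{x_n\})$, and apply the discrepancy bound for $X$ to $I^-$ once and to each of the at most two components of $I^+$, which is exactly where the factor $2$ in front of $D_N(X)$ comes from. The only difference is cosmetic: you spell out the case analysis for the sandwich and explicitly retain the $1/N$ normalisation, whereas the paper leaves both implicit.
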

\begin{proof}
Let $I = [a, b) \subset [0, 1)$. We let $I_{\delta}^+ = ([a-\delta, b+ \delta) + \mathbb{Z})\cap [0, 1)$ and notice that  $I_{\delta}^+$ is either an interval or the union of two intervals; we let $I_{\delta}^-$  to be the interval  $[a+\delta, b- \delta)$ if $b-a > 2 \delta$ or the empty set otherwise. We have
$$
I_{\delta}^- \subset I \subset I_{\delta}^+  \; ,\; \lambda(I_{\delta}^+ ) \le \lambda(I)+ 2 \delta\; \text{ and } \; \lambda(I_{\delta}^- ) \ge \lambda(I)- 2 \delta.
$$
We thus have
$$
\sum_{1 \le n \le N} \chi_I(y_n)- \lambda(I) \le \sum_{1\le n \le N} \chi_{I_{\delta^+}}(x_n) - \lambda(I) \le  \lambda(I_{\delta}^+) + 2 D_N(x_1, \ldots, x_N)- \lambda(I) \le 2 D_N(x_1, \ldots, x_N) + 2 \delta
$$
and
$$
\sum_{1 \le n \le N} \chi_I(y_n)- \lambda(I) \ge \sum_{1\le n \le N} \chi_{I_{\delta^-}}(x_n) - \lambda(I) \ge  \lambda(I_{\delta}^-) -  D_N(x_1, \ldots, x_N)- \lambda(I) \ge - D_N(x_1, \ldots, x_N) - 2 \delta,
$$
which implies
$$
D_N(y_1, y_2, \ldots, y_N) \le 2 D_N(x_1, \ldots, x_N) + 2 \delta.
$$
\end{proof}

\subsection{The multidimensinal Erd\H{o}s-Tur\'{a}n theorem}
For the case $k=1$, Erd\H{o}s and Tur\'{a}n \cite{ET} gave an upper bound for the discrepancy in terms of exponential sums. Their result has been generalised by Koksma \cite{Koksma} and Sz\"{u}sz \cite{Szusz} in the multidimensional case. The version we give is taken from \cite{DT} (Theorem 1.21, page 15).

\begin{proposition}\label{ETKS}
Let $X = \{x_1, x_2, \ldots, x_N\}$ be a finite set of elements of $\mathbb{R}^k$ and $H$ an arbitrary positive integer. We have
\begin{equation}\label{equETKS}
D_N(X)\le \left(\frac{3}{2}\right)^k\left(\frac{2}{H+1} + \sum_{0<\|h\|_{\infty}\le H} \frac{1}{r(h)}\abs{\frac{1}{N} \sum_{n=1}^N \e(h\cdot x_n)}\right),
\end{equation}
where, for $h = (h_1, h_2, \ldots, h_k) \in \mathbb{Z}^k$, we let $r(h) = \prod_{i=1}^k \max\{1, |h_i|\}$ and $u\cdot v$ denote the usual scalar product of two elements $u$ and $v$ in $\mathbb{R}^k$.
\end{proposition}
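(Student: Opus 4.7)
The plan is to reduce the discrepancy bound to an exponential-sum bound by sandwiching the indicator $\chi_I$ of any box $I = \prod_{i=1}^k [a_i, b_i) \subset [0,1)^k$ between two trigonometric polynomials in $k$ variables of coordinate-wise degree at most $H$, then expanding those polynomials as Fourier series and recognising the resulting sums.

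For the one-dimensional building block I would use a Vaaly/Beurling--Selberg style majorant and minorant: for every $[a,b) \subset [0,1)$ and every integer $H \ge 1$ there exist $1$-periodic trigonometric polynomials $v^\pm$ of degree at most $H$ such that
\begin{equation*}
v^-(x) \le \chi_{[a,b)}(\{x\}) \le v^+(x), \qquad \hat v^\pm(0) = (b-a) \pm \tfrac{1}{H+1},
\end{equation*}
while the remaining Fourier coefficients obey $\abs{\hat v^\pm(h)} \le \tfrac{3}{2}\cdot \tfrac{1}{\max(1,\abs{h})}$. Such objects can be produced by convolving a slightly inflated or deflated indicator with a Fejér kernel of order $H$ and estimating its Fourier coefficients directly; this is the place where the constant $3/2$, which will eventually yield the factor $(3/2)^k$, has to be earned with care.

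Next I would tensorise: define $\Psi^\pm(x_1, \ldots, x_k) = \prod_{i=1}^k v^\pm_i(x_i)$, where $v^\pm_i$ is the one-dimensional majorant or minorant attached to the $i$-th interval $[a_i, b_i)$. Then $\Psi^- \le \chi_I \le \Psi^+$ pointwise, the Fourier coefficients factorise as $\hat\Psi^\pm(h) = \prod_i \hat v^\pm_i(h_i)$, so $\abs{\hat\Psi^\pm(h)} \le (3/2)^k / r(h)$ for $h \neq 0$, while $\hat\Psi^\pm(0)$ differs from $\lambda(I)$ by at most $(3/2)^k \cdot \tfrac{2}{H+1}$ after a multilinear expansion of the $k$ perturbations $\tfrac{1}{H+1}$. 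Inserting these bounds into the Fourier expansion
\begin{equation*}
\frac{1}{N} \sum_n \Psi^\pm(\{x_n\}) = \hat\Psi^\pm(0) + \sum_{0 < \|h\|_\infty \le H} \hat\Psi^\pm(h) \cdot \frac{1}{N} \sum_n \e(h \cdot x_n),
\end{equation*}
and sandwiching $\tfrac{1}{N} \sum_n \chi_I(\{x_n\}) - \lambda(I)$ between the two quantities $\tfrac{1}{N} \sum_n \Psi^\pm(\{x_n\}) - \lambda(I)$ yields inequality~(\ref{equETKS}) upon taking the supremum over all $I$.

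The main technical obstacle is the book-keeping of constants so that the final multiplier is exactly $(3/2)^k$ and the zero-frequency term is $\tfrac{2}{H+1}$: one must balance the trivial bound $b-a \le 1$ against the $1/(\pi \abs{h})$ decay in the 1D step, and carefully control how the $1/(H+1)$ deviation of $\hat v^\pm(0)$ from $b-a$ accumulates through the multilinear expansion of $\hat\Psi^\pm(0)$. Once the one-dimensional estimates are secured with sharp constants, the multidimensional passage and the final telescoping of $k$ nested perturbations are essentially routine.
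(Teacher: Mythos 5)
First, note that the paper does not prove this proposition: it is cited directly from Drmota and Tichy~\cite{DT}, Theorem~1.21, so your argument must stand on its own rather than be compared with an internal proof.

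Your overall plan is the classical one (one-dimensional Beurling--Selberg/Vaaler extremal trigonometric polynomials, tensorisation, Fourier expansion), and the one-dimensional ingredients you cite are essentially correct. The genuine gap lies in the tensorisation of the minorant: the product $\Psi^- = \prod_{i=1}^k v^-_i$ is \emph{not} a minorant of $\chi_I$ in general. Any nonzero trigonometric polynomial $v^-_i$ lying below $\chi_{[a_i,b_i)}$ must be strictly negative somewhere outside $[a_i,b_i)$ (a trigonometric polynomial cannot be nonnegative and vanish on a nontrivial subinterval of the circle without vanishing identically); so if two or more of the coordinates land in regions where the corresponding $v^-_i$ is negative, the product $\prod_i v^-_i$ has an even number of negative factors and is strictly positive while $\chi_I = 0$. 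The pointwise inequality $\Psi^-\le\chi_I$ therefore fails, and the lower half of your sandwich collapses.

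The majorant side is unproblematic, since $v^+_i \ge \chi_{[a_i,b_i)} \ge 0$ forces $v^+_i \ge 0$, whence $\Psi^+ = \prod_i v^+_i \ge \chi_I$. The missing idea is to build the minorant by \emph{telescoping} rather than by multiplying: writing $\chi_j = \chi_{[a_j,b_j)}(x_j)$ and using $0\le\chi_i\le v^+_i$ together with $v^+_j-\chi_j\le v^+_j-v^-_j$, one has
\[
\prod_{j} v^+_j - \prod_{j} \chi_j
= \sum_{j=1}^{k} \bigl(v^+_j - \chi_j\bigr)\Bigl(\prod_{i<j}\chi_i\Bigr)\Bigl(\prod_{i>j}v^+_i\Bigr)
\le \sum_{j=1}^{k}\bigl(v^+_j - v^-_j\bigr)\prod_{i\neq j} v^+_i,
\]
so that $\Phi^- := \prod_j v^+_j - \sum_j (v^+_j - v^-_j)\prod_{i\neq j} v^+_i$ is a genuine trigonometric-polynomial minorant of $\chi_I$ with controllable Fourier coefficients. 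This (or a close variant) is the device behind the standard proofs, including the one referenced in~\cite{DT}. Once $\Psi^-$ is replaced by such a corrected minorant, the rest of your outline --- Fourier expansion, the coefficient bounds $\lvert\hat v^\pm(h)\rvert\le(3/2)/\max(1,\lvert h\rvert)$, and bookkeeping of the zero-frequency deviation via a geometric sum $\sum_{j=1}^{k}(3/2)^{j-1}/(H+1)\le 2\bigl((3/2)^k-1\bigr)/(H+1)$ --- can be carried through to the stated constant $(3/2)^k$.
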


\subsection{Discrepancy of a polynomial sequence}
We give here an upper bound for the discrepancy of a polynomial sequence in terms of rational approximations of its coefficients. This will be useful for the proof of Theorem \ref{thm_not_normal}.

\begin{proposition}\label{prp_approx_k}
  Let $P(x) = \sum_{i=0}^{d} \alpha_i x^{i}$ be a  polynomial of degree $d$ with real coefficients. For $i \in [1, d]$, we let $Q_i$, $q_i$ and $p_i$ be rational integers such that
\begin{equation}\label{approxP}
\gcd(p_i, q_i) = 1 \; , \; 1 \le q_i \le Q_i \; \text{ and }\; \abs{ \alpha_i- \frac{p_i}{q_i}} \le \frac{1}{q_i\cdot Q_i} .
\end{equation}
  Then we have for any $k$ such that $1\leq k \le d$
  \begin{align}\label{eqn_coeff_d}
    D_N(P(1),\ldots, P(N)) \ll_{_d} q^{k} \log(eq_{k}) \rb{\frac{1}{q_{k}^{1/2}} + \frac{q_{k}}{N}}^{1/2^{(k-1)}} + 
          q_{k}^{1/2^k} \sum_{i = k+1}^{d} \frac{N^i}{Q_i},
  \end{align}
  where $q:= \prod_{i=k+1}^{d} q_i$.
\end{proposition}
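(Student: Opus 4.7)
\textbf{Proof plan for Proposition \ref{prp_approx_k}.} The strategy combines the one-dimensional case of Proposition \ref{ETKS} (Erd\H{o}s--Tur\'{a}n) with the exponential sum estimate of Proposition \ref{lem_estimate}, after first rationalizing the high-degree coefficients of $P$. For $i>k$ write $\alpha_i=p_i/q_i+\beta_i$ with $|\beta_i|\le 1/(q_iQ_i)$, and introduce the auxiliary polynomial
\[
 \tilde P(n) := \sum_{i=0}^{k}\alpha_i n^i + \sum_{i=k+1}^{d}\frac{p_i}{q_i}n^i,
\]
so that $|P(n)-\tilde P(n)|\le \sum_{i>k}N^i/Q_i$ for $1\le n\le N$. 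The one-dimensional case of Proposition \ref{ETKS} gives, for any $H\ge 1$,
\[
 D_N(P(1),\ldots,P(N))\ll \frac{1}{H}+\sum_{h=1}^{H}\frac{1}{h}\frac{|S_h|}{N},\qquad S_h:=\sum_{n=1}^{N}\e(hP(n)),
\]
and term-by-term comparison yields $|S_h-\tilde S_h|\ll hN\sum_{i>k}N^i/Q_i$, with $\tilde S_h$ the analogous sum for $\tilde P$.

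Next I estimate $\tilde S_h$ by exploiting the periodicity of the rationalized high-degree part. Setting $q:=\prod_{i=k+1}^{d}q_i$, the quantity $h\sum_{i>k}(p_i/q_i)n^i$ modulo $1$ depends only on $n$ modulo $q$ (since $q_i\mid q$ implies $(a+qm)^i\equiv a^i\pmod{q_i}$). Writing $n=a+qm$ with $a\in\{0,\ldots,q-1\}$,
\[
 \tilde S_h=\sum_{a=0}^{q-1}\e\bigl(\theta_a(h)\bigr)\sum_{m}\e\bigl(hQ_a(m)\bigr),
\]
where $\theta_a(h):=h\sum_{i>k}(p_i/q_i)a^i$, $Q_a(m):=\sum_{i=0}^{k}\alpha_i(a+qm)^i$ is a polynomial in $m$ of degree $k$ with leading coefficient $\alpha_kq^k$, and $m$ ranges over an interval of length $N_m\asymp N/q$.

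I apply Proposition \ref{lem_estimate} to each inner sum, using the approximation $p_kq^k/q_k$ of $\alpha_kq^k$. Reducing by $g:=\gcd(q^k,q_k)$ produces lowest terms with denominator $\tilde q=q_k/g$, and choosing the auxiliary parameter $R$ of Proposition \ref{lem_estimate} of order $q_k^{1/2}$ (subject to the constraint $2hk!R^{2-1/2^{k-2}}\le\tilde q$) yields, up to factors depending on $k$ and $q$, the per-$a$ bound
\[
 |T_a|/N_m \ll \rb{\frac{1}{q_k^{1/2}}+\frac{q_k}{N}}^{1/2^{k-1}}.
\]
Summing over $a\in\{0,\ldots,q-1\}$ inflates this by a factor $q$; additional powers of $q$, arising from the scaling of the leading coefficient by $q^k$ and the reduction in sum length from $N$ to $N/q$ in each residue class, are absorbed into an overall $q^k$ prefactor. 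The sum $\sum_{h\le H}h^{-1}$ contributes $\log(eq_k)$, producing the main term $q^k\log(eq_k)\rb{q_k^{-1/2}+q_k/N}^{1/2^{k-1}}$.

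Finally, take $H\asymp q_k^{1/2^k}$: this absorbs the $1/H$ Erd\H{o}s--Tur\'{a}n term into the main term, and the total perturbation error $H\sum_{i>k}N^i/Q_i$ becomes $q_k^{1/2^k}\sum_{i>k}N^i/Q_i$, giving the second summand of the claimed bound. The principal obstacle is the careful reduction $p_kq^k/q_k\mapsto \tilde p/\tilde q$ and its compatibility with the hypothesis $|\alpha_kq^k-\tilde p/\tilde q|\le 1/\tilde q^2$ of Proposition \ref{lem_estimate}; this translates into the condition $Q_k\ge q^kq_k/g^2$, which can fail. In such regimes one replaces $\tilde p/\tilde q$ by a Dirichlet approximation of $\alpha_kq^k$ of denominator bounded by $N_m$ and checks that the resulting bound still fits into the claimed form; careful tracking of the dependence of $R$ on $h$ is what produces the logarithmic factor $\log(eq_k)$ rather than a polynomial in $H$.
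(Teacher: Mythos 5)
Your high-level strategy coincides with the paper's: rationalize the degree-$(>k)$ coefficients, use the one-dimensional Erd\H{o}s--Tur\'an inequality, split $\{1,\ldots,N\}$ into residue classes mod $q=\prod_{i>k}q_i$ so that the high-degree rational part becomes constant, apply Proposition~\ref{lem_estimate} on each class, and then tune $H$ to balance the error terms. The decomposition into $\tilde P$ plus a small perturbation, the residue-class splitting, and the choices $R\asymp q_k^{1/2}$, $H\asymp q_k^{1/2^k}$ (up to a $q$-dependent factor) all match.

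However, there is a genuine gap at precisely the step you flag. You feed into Proposition~\ref{lem_estimate} the polynomial $Q_a(m)$ whose leading coefficient is $\alpha_kq^k$, together with $h$-parameter $h$; the hypothesis~\eqref{alphak} then requires a reduced fraction $\tilde p/\tilde q$ with $\lvert\alpha_kq^k-\tilde p/\tilde q\rvert\le 1/\tilde q^2$. The natural candidate, $p_kq^k/q_k$ reduced to denominator $\tilde q=q_k/\gcd(q^k,q_k)$, satisfies only $\lvert\alpha_kq^k-\tilde p/\tilde q\rvert\le q^k/(q_kQ_k)$, and the inequality $q^k/(q_kQ_k)\le 1/\tilde q^2$ amounts to $Q_k\ge q^kq_k/\gcd(q^k,q_k)^2$, which the hypotheses do not guarantee. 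Your fallback of taking a Dirichlet approximation of $\alpha_kq^k$ with denominator $\le N/q$ does not repair this: Proposition~\ref{lem_estimate}'s conclusion depends on that denominator through the term $q/N$ and through the constraint $2hk!R^{2-1/2^{k-2}}\le q$ which bounds $R$ (and hence the $1/R$ term) from above, and you have no control on where in $[1,N/q]$ this denominator falls — if it is very small, $R$ must be small and $1/R$ is large; if it is large, $q/N$ is large. Moreover the cases in which the Dirichlet denominator differs from $q_k/\gcd(q^k,q_k)$ are precisely those where $Q_k$ is not much larger than $q^k q_k$, so one cannot even recover the good approximation by continued-fraction uniqueness.

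The paper sidesteps all of this with a small but decisive change of viewpoint: instead of regarding $\e(hq^kQ_r(t))$ as $h$ times a polynomial with leading coefficient $\alpha_kq^k$, regard it as $(hq^k)$ times a polynomial $Q_r$ with leading coefficient $\alpha_k$. Then the rational approximation needed in Proposition~\ref{lem_estimate} is just $p_k/q_k$, whose quality $\lvert\alpha_k-p_k/q_k\rvert\le 1/(q_kQ_k)\le 1/q_k^2$ is guaranteed by hypothesis~\eqref{approxP} with no $\gcd$ reduction at all; the only cost is that the ``$h$'' of Proposition~\ref{lem_estimate} becomes $hq^k$, which is absorbed by taking $H=\lfloor q_k^{1/2^k}/(2k!\,q^k)\rfloor$ so that $2Hq^kk!R^{2-1/2^{k-2}}\le q_k$. (Your choice $H\asymp q_k^{1/2^k}$ without the $q^{-k}$ factor would also violate this constraint.) You should rewrite the application of Proposition~\ref{lem_estimate} to use this reallocation of $q^k$ from the polynomial into the $h$-parameter; the rest of your outline then goes through.
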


\begin{proof}
  We want to separate the contribution of the different $\alpha_{k}$'s to the discrepancy. Relation (\ref{eqn_coeff_d}) is trivially true when $q_k= 1$ and we may assume that $q_k \ge 2$; since $p_k$ and $q_k$ are coprime, $p_k$ is different from $0$ and so is $\alpha_k$.
  We approximate the higher degree coefficients by rational numbers.
We define 
  \begin{align*}
    y_n &:= \sum_{i=0}^{k} \alpha_i n^{i} + \sum_{i=k+1}^{d} \frac{p_i}{q_i} n^{i}\\
    z_n &:= \sum_{i=k+1}^{d} \rb{\alpha_i - \frac{p_i}{q_i}} n^{i},
  \end{align*}
  with the usual convention that $z_n=0$ when $k = d$.\\
In order to apply the original one dimension Erd\H{o}s-Tur\'{a}n inequality, we have to estimate trigonometrical sums. We have
  \begin{align*}        
    \sum_{0 <h\leq H} \frac{1}{h} \abs{\frac{1}{N} \sum_{n\leq N} \e(h P(n))} &= \sum_{0 <h\leq H} \frac{1}{h} \abs{\frac{1}{N} \sum_{n\leq N} \e(h (y_n + z_n))}\\
        &= \sum_{0<h\leq H} \frac{1}{h} \abs{\frac{1}{N} \sum_{n\leq N} \e(h y_n) + \frac{1}{N} \sum_{n\leq N} \e(h y_n) \rb{\e(h z_n) -1}}\\
        &\leq \sum_{0<h\leq H} \frac{1}{h} \rb{\abs{\frac{1}{N} \sum_{n\leq N} \e(h y_n)} + \frac{1}{N} \sum_{n\leq N} \abs{\e(h y_n) \rb{\e(h z_n) -1} } }\\
        &\leq \sum_{0<h\leq H} \frac{1}{h} \abs{\frac{1}{N} \sum_{n\leq N} \e(h y_n)} +\sum_{0<h\leq H} \frac{1}{h}  \frac{1}{N} \sum_{n\leq N} 2 \pi \abs{h z_n}.
  \end{align*}
The last sum is easily treated thanks to~\eqref{approxP}. We have
\begin{equation}\notag
\sum_{0<h\leq H} \frac{1}{h}  \frac{1}{N} \sum_{n\leq N} 2 \pi \abs{h z_n} \le 2\pi H \sum_{i=k+1}^d \frac{N^{i}}{q_i \cdot Q_i}  \le 2\pi H \sum_{i=k+1}^d \frac{N^{i}}{Q_i}.
\end{equation}
We thus have
\begin{equation}\label{zn}
\sum_{0 <h\leq H} \frac{1}{h} \abs{\frac{1}{N} \sum_{n\leq N} \e(h P(n))}  \leq \sum_{0<h\leq H} \frac{1}{h} \abs{\frac{1}{N} \sum_{n\leq N} \e(h y_n)} + 2\pi H \sum_{i=k+1}^d \frac{N^{i}}{Q_i}.
\end{equation}

  Thus, we want to estimate $\abs{\frac{1}{N} \sum_{n\leq N} \e(h y_n)}$. The following lemma will permit us to reduce the question to the evaluation of trigonometrical sums over polynomials of degree $k$.
  
  \begin{lemma}\label{reducetok}
  With the above notation, for any integer $r$, there exists a polynomial $Q_r$ of degree $k$ with leading coefficient $\alpha_k$ such that
  \begin{equation}\notag
  y_{tq+r}-q^kQ_r(t) \in \mathbb{Z} \; \text{ for any }\; t \in \mathbb{Z}.
  \end{equation}
  \end{lemma}
  \begin{proof}
  We recall that $q =  \prod_{i=k+1}^{d} q_i$. This implies that for any integer $t$ the sum
$ \sum_{i=k+1}^{d} \frac{p_i}{q_i} (tq+r)^{i} $ is equal, up to an integer, to $ \sum_{i=k+1}^{d} \frac{p_i}{q_i} r^{i}$ which is independent of $t$. By the binomial expansion, the first part of $  y_{tq+r}$, namely  $\sum_{i=0}^{k} \alpha_i (qt+r)^{i}$, is easily seen to be a polynomial of degree $k$ with leading coefficient $\alpha_k q^k$.
 \end{proof}

  We have
  \begin{align*}
    \abs{\frac{1}{N} \sum_{n\leq N} \e(h y_n)}
      &= \abs{\frac{1}{N} \sum_{r\leq q} \sum_{\substack{n \leq N\\ n \equiv r \bmod q}} \e(h y_n)}
      \leq \sum_{0 \le r < q} \frac{1}{N} \abs{\sum_{\substack{n \leq N \\ n \equiv r \bmod q}} \e(h y_n)}\\
      &\le \sum_{0 \le r < q} \frac{1}{N} \abs{\sum_{0 \le t\le (N-r)/q} \e(h y_{tq+r}) }
     \le \sum_{0 \le r < q} \frac{1}{N} \abs{\sum_{0 \le t\le (N-r)/q} \e(h q^k Q_r(t))}. 
  \end{align*}

  We define the integers $R$ and $H$ by
  $$
  R= \lfloor q_k^{1/2}\rfloor \; \text{ and } \; H = \lfloor q_k^{(1/2^k)}/(2 k! q^k)\rfloor.
  $$
  
We may assume that $H \ge 1$, since otherwise Proposition \ref{prp_approx_k} is trivial. We readily check that  the condition $2 H q^{k} k! R^{2-1/2^{k-2}}\leq q_{k}$ holds and that as soon as $N$ is large enough we have $R \le (N-q)/q$. We can thus apply Proposition~\ref{lem_estimate} which implies that for $1 \le h \le H$ we have 
  \begin{equation}\notag
  \abs{\sum_{0 \le t\le (N-r)/q} \e(h q^k Q_r(t)} \ll ((N-r)/q) \left(\frac{1}{R} + \frac{q_k}{(N-r)/q}\right)^{1/2^{k-1}} \ll N \left(\frac{1}{R} + \frac{q_k}{N}\right)^{1/2^{k-1}}.
  \end{equation}
 This leads to
  $$
\sum_{0<h\leq H} \frac{1}{h} \abs{\frac{1}{N} \sum_{n\leq N} \e(h y_n)} \ll q \log(eH) \left(\frac{1}{R} + \frac{q_k}{N}\right)^{1/2^{k-1}} \ll q \log(eq_k) \left(\frac{1}{q_k^{1/2}} + \frac{q_k}{N}\right)^{1/2^{k-1}}.
  $$
  We combine this, Proposition \ref{ETKS} and (\ref{zn}), getting Proposition \ref{prp_approx_k}.
\end{proof}

We notice that the optimal choice of $H$ and $R$ permits to replace the term $q^k$ in (\ref{eqn_coeff_d}) by $q^{f(k)}$ where $f(k)$ tends to $1$ as $k$ tends to infinity, but this is irrelevant for our application. 

\section{Proof of Theorem~\ref{thm_k_uniform}}

To show that the sequence $(\lfloor n^c \rfloor)_n$ is $k$-uniformly distributed modulo $m$, it is enough to show that for any $B = (b_0, b_1, \ldots, b_{k-1}) \in \{0,1, \ldots, m-1\}^k$ we have, as $N$ tends to infinity
$$
\frac 1N \Card \{n \in [N, 2N) : (\lfloor n^c \rfloor,\lfloor (n+1)^c \rfloor,\ldots,\lfloor (n+k-1)^c \rfloor) = B \} - \frac 1{m^k} \; \text{ tends to }\; 0.
$$
Thanks to the straightforward equivalence
$$
\lfloor n^c \rfloor  \equiv b \, (\operatorname{mod} m)\;  \Longleftrightarrow \; \left\{\frac{n^c}{m}\right\} \in \left[\frac{b}{m}, \frac{b+1}{m}\right)
$$
and the definition of the discrepancy given above, we have
\begin{eqnarray}\label{repthrudisc}
\abs{\frac{1}{N} \Card\{n \in [N, 2N) \colon (\lfloor n^c \rfloor,\lfloor (n+1)^c \rfloor,\ldots,\lfloor (n+k-1)^c \rfloor) = B\} - \frac{1}{m^k}}\\
\notag \le D_N\left(\left\{\left(\frac{n^c}{m}, \frac{(n+1)^c}{m},\ldots,\frac{(n+k-1)^c}{m}\right) \colon n \in [N, 2N)\right\}\right).
\end{eqnarray}
To evaluate the right hand side of (\ref{repthrudisc}), we use Proposition \ref{ETKS} with $H= N^{\frac{\|c\|}{(k+2)2^{c+1}}}$. Combining it with Proposition \ref{lem_3}, we obtain
\begin{equation}\notag
\abs{\frac{1}{N} \Card\{n \in [N, 2N) \colon (\lfloor n^c \rfloor),\lfloor (n+1)^c \rfloor),\ldots,\lfloor (n+k-1)^c \rfloor)) = B\} - \frac{1}{m^k}} \ll_{k, c, m} N^{-\frac{\|c\|}{(k+2)2^{c+1}}}.
\end{equation}
Theorem \ref{thm_k_uniform} is thus proved.
\begin{flushright} $\Box$\end{flushright}

\section{Proof of Theorem~\ref{thm_not_normal}}

\subsection{Coefficients of polynomials with large discrepancy}

Our first step is to show that if a sequence which is close to a polynomial has a large discrepancy, the non constant coefficients of the underlying polynomial have very good approximations with rationals with bounded denominators.

\begin{theorem}\label{thm_pol_small_coeffs}
Let $\delta$ be a positive number, $d$ a natural integer. There exists a positive integer $M(\delta, d)$ having the following property:\\
Let $P(x) = \sum_{k=1}^d \alpha_k x^k$ be a polynomial of degree $d$ such that for any sufficiently large $N$, for any $\eta = (\eta_1, \ldots, \eta_N)$ with $\|\eta\|_{\infty} \le \delta$, we have
\begin{equation}\label{Ppertur}
D_N(P(1)+\eta_1, P(2) + \eta_2, \ldots, P(N)+\eta_N) > 4 \delta.
\end{equation}
Then for all sufficiently large $N$, we have
\begin{equation}\label{eqn_not-normal}
\forall k \in [1, d], \exists (p_k, q_k) \text{  with } \gcd(p_k, q_k) = 1, 1\le q_k \le M(\delta, d) \text{ and } \abs{\alpha_k - \frac{p_k}{q_k}} \le N^{-\frac{(k+2)!}{2(d+2)!}}.
\end{equation}
\end{theorem}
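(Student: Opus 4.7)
The plan is to prove, by downward induction on $k$ from $d$ to $1$, that each coefficient $\alpha_k$ is in fact \emph{exactly} rational with denominator bounded by some constant $M_k(\delta,d)$. Since an exact rational representation has zero error, the polynomial-decay bound $N^{-(k+2)!/(2(d+2)!)}$ demanded by the conclusion then holds trivially. The perturbation $\eta$ plays no direct role in the argument: taking $\eta = 0$ in the hypothesis already yields $D_N(P(1), \ldots, P(N)) > 4\delta$ for every sufficiently large $N$. (The perturbation is built into the statement for use in the proof of Theorem~\ref{thm_not_normal}, where $\lfloor n^c\rfloor/m$ differs from a suitable approximating polynomial by a bounded quantity.)

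For the base case $k = d$, I would apply Dirichlet's theorem at level $Q_d = N^{1/2}$ to obtain $q_d \leq N^{1/2}$ with $|\alpha_d - p_d/q_d| \leq 1/(q_d N^{1/2})$. Applying Proposition~\ref{prp_approx_k} at $k = d$, where the second sum is empty and $q = 1$, yields
$$4\delta < D_N \ll_d \log(eq_d) \rb{q_d^{-1/2} + q_d/N}^{1/2^{d-1}} \ll_d \frac{\log(eq_d)}{q_d^{1/2^d}},$$
the last step using $q_d \leq N^{1/2}$ to absorb $q_d/N$ into $q_d^{-1/2}$. Since $\log(x)/x^{1/2^d}\to 0$ as $x\to\infty$, this forces $q_d \leq M_d$ uniformly in $N$. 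Then the convergents $p_d(N)/q_d(N)$ tend to $\alpha_d$ (as the error $1/(q_d N^{1/2})$ vanishes) while remaining inside the locally finite set $\{p/q : q \leq M_d\}$, so they must stabilize; hence $\alpha_d$ itself is rational with denominator $q_d^* \leq M_d$.

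For the inductive step at level $k < d$, assume $\alpha_i = p_i^*/q_i^*$ exactly with $q_i^* \leq M_i$ for each $i = k+1, \ldots, d$. I apply Dirichlet to $\alpha_k$ at level $Q_k = N^{1/2}$ and -- the crucial move -- take $Q_i = N^{2d}$ for every $i > k$; this is legitimate precisely because the exact rational representations of $\alpha_i$ carry zero error and thus accept arbitrarily large $Q_i$. Proposition~\ref{prp_approx_k} then gives
$$4\delta < D_N \ll_d \rb{\prod_{i > k} q_i^*}^{k} \frac{\log(eq_k)}{q_k^{1/2^k}} + q_k^{1/2^k} \sum_{i = k+1}^d \frac{N^i}{N^{2d}},$$
and the second term is $O(N^{1/2^{k+1} - d}) = o(1)$. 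The first term is dominated by $C_k(\delta, d) \log(eq_k)/q_k^{1/2^k}$, so the same reasoning forces $q_k \leq M_k$, and the same discreteness argument upgrades this to $\alpha_k$ being exactly rational with denominator at most $M_k$, closing the induction.

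The main obstacle is precisely controlling the second term of Proposition~\ref{prp_approx_k}. A naive strategy that approximates every coefficient via Dirichlet at the common level $Q = N^{1/2}$ makes that term grow like $q_k^{1/2^k} N^{d-1/2}$, which overwhelms the discrepancy bound and breaks the argument. The bootstrap embedded in the induction circumvents this: by first showing that each higher-degree coefficient is \emph{exactly} rational rather than merely well-approximable, one may take the corresponding $Q_i$ arbitrarily large in the subsequent step, killing the troublesome sum and cleanly isolating the contribution of $\alpha_k$.
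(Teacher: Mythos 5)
Your proposal is correct, and it takes a genuinely different route from the paper. Both proofs rest on Proposition~\ref{prp_approx_k} and proceed by downward induction on the coefficient index $k$, but they differ in how the error term $q_k^{1/2^k}\sum_{i>k}N^i/Q_i$ is controlled. The paper fixes a hierarchy of scales $N_i=\lfloor N^{(i+2)!/(2(d+2)!)}+1\rfloor$ and Dirichlet levels $Q_i=\lfloor N_i^{1-\varepsilon}+1\rfloor$ with $\varepsilon=1/(2(d+2))$, applies Proposition~\ref{prp_approx_k} at scale $N_{k-1}$ in each step, and shows the tail sum is $O(N^{-1/(2(d+2))})$ by a direct computation. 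You instead upgrade the boundedness of $q_d$ to \emph{exact} rationality of $\alpha_d$ (a convergent sequence of rationals with uniformly bounded denominators must be eventually constant and equal to its limit), and then, in the inductive step, exploit the fact that an exact rational $p_i^*/q_i^*$ satisfies the approximation hypothesis of Proposition~\ref{prp_approx_k} for \emph{any} level $Q_i$; taking $Q_i=N^{2d}$ annihilates the tail sum at once, with no delicate bookkeeping. This yields a strictly stronger conclusion (each $\alpha_k$ is actually rational with bounded denominator), and, because the resulting approximation error is zero, any decay rate $N^{-\text{const}}$ in the conclusion is satisfied for free, whereas the paper must carry the exponent $(i+2)!/(2(d+2)!)$ through the calculation. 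You also correctly observe that Proposition~\ref{perturbed} is unnecessary here: since hypothesis~\eqref{Ppertur} is quantified over all $\eta$ with $\|\eta\|_\infty\le\delta$, taking $\eta=0$ already gives $D_N(P(1),\ldots,P(N))>4\delta$, while the paper obtains only the weaker $>\delta$ via Proposition~\ref{perturbed}.
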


\begin{proof}
The perturbation by $\eta$ will be useful for the application, but we can easily cope with it: by Proposition \ref{perturbed}, the bounds $\|\eta\|_{\infty} \le \delta$ and (\ref{Ppertur}) imply
\begin{equation}\label{P}
D_N(P(1), P(2), \ldots, P(N)) >  \delta,
\end{equation}
which is the condition we are going to use from now on.\\

Let $N$ be a sufficiently large integer. For $i\in [1, d]$ we define
$$
N_i =\lfloor N^{\frac{(i+2)!}{2(d+2)!} } + 1\rfloor\; , \; Q_i  = \lfloor N_i^{1-\varepsilon} + 1\rfloor, \;\text{ where } \; \varepsilon = \frac{1}{2(d+2)},
$$
and we let $(p_i, q_i)$ be such that
\begin{equation}\notag
\gcd(p_i, q_i) = 1\; , \; 1 \le q_i \le Q_i \; \text{ and } \abs{\alpha_i - \frac{p_i}{q_i}} \le \frac{1}{q_i\cdot Q_i}.
\end{equation}

In order to prove the theorem, we shall show that for any $k$ in $[1, d]$, there exists $M_k(\delta, d)$ such that
\begin{equation}\label{induc}
\forall i \in [k, d], \exists (p_i, q_i) \text{  with } \gcd(p_i, q_i) = 1, 1\le q_i \le M_k(\delta, d) \text{ and } \abs{\alpha_i - \frac{p_i}{q_i}} \le N^{-\frac{(i+2)!}{2(d+2)!}}.
\end{equation}

We first prove (\ref{induc})  when $k=d$. By (\ref{P}) and Proposition \ref{prp_approx_k}, we have
\begin{equation}\notag
\delta \le D_{N_d}(P(1), \ldots, P(N_d)) \ld \log(eq_d) \rb{\frac{1}{q_{d}^{1/2}} + \frac{q_{d}}{N_d}}^{1/2^{d-1}} \ld \log(eq_d) \max\left(\frac{1}{q_{d}^{1/2d}}, \rb{\frac{Q_d}{N_d}}^{1/2^{d-1}} \right).
\end{equation}
By definition,  we have $Q_d \le 2N_d^{1-\varepsilon}$ and so the quantity $\log(eq_d) (Q_d/N_d)^{1/2^{d-1}}$ wich is less than $\log(eQ_d) (Q_d/N_d)^{1/2^{d-1}}$  tends to zero as $N_d$ tends to infinity and thus as $N$ tends to infinity; when $N$ is large enough, the term $\log(eq_d) q_d^{-1/2^d}$ has to be bounded from below, i.e. $q_d$ has to be bounded from above. This is the case $k=d$ of the theorem. \\

Assume now that~\eqref{induc} is proved for some $k \in [2, d]$ and let us show that it is also true for $k-1$. We are going to use Proposition \ref{prp_approx_k}, with $N = N_{k-1}$ and start with some preliminary computation. 
$$
q = \prod_{i=k}^d q_i \le M_k(\delta, d)^d.
$$
We also have
 \begin{align*}
    Q_{k-1}^{1/2^{k-2}} \sum_{i=k}^{d} \frac{N_k^i}{Q_i} &\leq \sum_{i=k}^{d} \frac{N_{k-1}^{i+1}}{Q_i}= \sum_{i=k}^{d} \frac{N^{(i+1)\frac{(k+1)!}{(d+2)!} } }{N^{(1 - \varepsilon)\frac{(i+2)!}{(d+2)!} } }= \sum_{i=k}^{d} \rb{\frac{N^{i+1}}{N^{(1 - \varepsilon)\frac{(i+2)!}{(k+1)!} } }}^{\frac{(k+1)!}{(d+2)!}}\\
        &\leq \sum_{i=k}^{d} \rb{\frac{N^{i+1}}{N^{(1-\varepsilon) (i+2)} } }^{\frac{(k+1)!}{(d+2)!}}= \sum_{i=k}^{d} \rb{\frac{1}{N^{1-\varepsilon(i+2)} } }^{\frac{(k+1)!}{(d+2)!}}\leq (d-k+1) N^{-\frac{1-(d+2)\varepsilon}{(d+2)}}.
  \end{align*}
We recall that $\varepsilon = 1/(2(d+2))$ and obtain
$$
q_{k-1}^{1/2^{k-2}} \sum_{i=k}^{d} \frac{N_{k-1}^i}{Q_i} \le Q_{k-1}^{1/2^{k-2}} \sum_{i=k}^{d} \frac{N_{k-1}^i}{Q_i} \leq d N^{-\frac{1}{2(d+2)}}.
$$
From that computation and Proposition  \ref{prp_approx_k}, we get
\begin{align*}
    \delta \le D_{N_{k-1}}(x_1,\ldots,x_{N_{k-1}}) &\ld M_k(\delta, d)^{d} \log(eq_{k-1}) \max \left(\frac{1}{q_{k-1}^{1/2^{k-1}}}\, , \left(\frac{q_{k-1}}{N_{k-1}}\right)^{1/2^{k-2}}\right)+ q_{k-1}^{1/2^{k-2}} \sum_{i=k}^{d} \frac{N_{k-1}^i}{Q_i}\\
   &\ld  M_k(\delta, d)^{d} \log(eq_{k-1}) \max \left(q_{k-1}^{-1/2^{k-1}}\, , N_{k-1}^{-\varepsilon/2{k-2}}\right) + N^{-\frac{1}{2(d+2)}}.
  \end{align*}
Arguing as above, we see that for $N$ large enough, this relation can hold only if $q_{k-1}$ is bounded from below by $M'(\delta, d)$, say. We let $M_{k-1}(\delta, d) = \max(M'(\delta, d), M_{k}(\delta, d))$.

Induction implies the validity of Theorem \ref{thm_pol_small_coeffs} with $M(\delta, d)  = M_1(\delta, d) $.
\end{proof}

\subsection{Non-uniformity modulo $m\ge3$ of perturbed polynomials}

The following result shows that the sequence of the integral parts of the values of a slightly perturbed real polynomial cannnot be uniform modulo any $m \ge 3$.

 \begin{theorem}\label{mmorethan2}
Let $m$ and $d$ be positive integers with $m \ge 3$. There exist a positive integer $N = N(m,d)$ and a block $B \in \{0, 1, \ldots m-1\}^N$ such that for any real polynomial $P$ of degree $d$ and any sequence $\eta = (\eta_n)_n$ of real numbers bounded by $1/20$ there exists $n \in [1, N]$ such that 
\begin{equation}\label{notB}
\lfloor P(n) + \eta_n\rfloor \not\equiv b_n\, (\mod m).
\end{equation}
\end{theorem}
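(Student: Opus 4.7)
The plan is a counting argument. We bound the number of length-$N$ sequences of the form $(\lfloor P(n) + \eta_n \rfloor \bmod m)_{n=1}^N$ with $P$ of degree $d$ and $\|\eta\|_\infty \leq 1/20$, and show that this count is less than $m^N$ for $m \geq 3$ and $N = N(m,d)$ large enough; pigeonhole then produces the unrealized block $B$.

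The perturbation contributes at most a factor $2^N$ per fixed $P$: since the interval $[P(n) - 1/20, P(n) + 1/20]$ has length $1/10 < 1$, it contains at most one integer, so $\lfloor P(n) + \eta_n \rfloor$ takes at most two consecutive integer values as $\eta_n$ ranges over $[-1/20, 1/20]$. To handle $P$, we parameterize by $\beta_k = \alpha_k/m \bmod 1$, so that $\beta = (\beta_0, \ldots, \beta_d) \in [0, 1)^{d+1}$ and $P(n)/m \bmod 1 = \{\sum_{k=0}^d \beta_k n^k\}$. In this parameter cube, for each $n \in [1, N]$ and $j \in \{0, \ldots, m-1\}$, introduce the three hyperplane conditions
\[
\sum_k \beta_k n^k \equiv \frac{j}{m}, \quad \sum_k \beta_k n^k \equiv \frac{j}{m} + \frac{1}{20m}, \quad \sum_k \beta_k n^k \equiv \frac{j}{m} + \frac{19}{20m} \pmod{1},
\]
where the first tracks the residue $\lfloor P(n) \rfloor \bmod m$ and the last two encode the ``regime'' of $\{P(n)\}$, namely whether it lies in $[0, 1/20)$, in $[1/20, 19/20]$, or in $(19/20, 1)$. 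Each such congruence lifts to at most $(d+1)n^d + 1$ parallel affine hyperplanes in $[0, 1)^{d+1}$, giving a total of at most $C_1(d, m) N^{d+1}$ hyperplanes; the standard cell bound for hyperplane arrangements in $\mathbb{R}^{d+1}$ then yields at most $C_2(d, m) N^{(d+1)^2}$ open cells.

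On each open cell both $\lfloor P(n) \rfloor \bmod m$ and the regime of $\{P(n)\}$ are constant in $\beta$, so at each $n$ the value $\lfloor P(n) + \eta_n \rfloor \bmod m$ takes at most two specific consecutive residues as $(P, \eta)$ varies within the cell. Therefore each cell contributes at most $2^N$ sequences, and the total is bounded by $C_2(d, m) N^{(d+1)^2} \cdot 2^N$. For $m \geq 3$ this is strictly less than $m^N$ once $N$ is large enough, since $(m/2)^N$ grows faster than any polynomial in $N$. The main technical step is the inclusion of the ``regime'' hyperplanes, without which the per-cell bound would only be $3^N$, which fails for $m = 3$. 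Note that the hypothesis $m \geq 3$ is essential at the comparison step: for $m = 2$, perturbations of $P \equiv 0$ already realize every length-$N$ sequence and the argument collapses.
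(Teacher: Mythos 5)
Your proof is correct, and it takes a genuinely different route from the paper. The paper's argument is \emph{constructive}: it first proves (Theorem~\ref{thm_pol_small_coeffs}) that if a polynomial sequence perturbed within $\pm\delta$ has discrepancy $>4\delta$, then every coefficient $\alpha_k$ is within $N^{-(k+2)!/(2(d+2)!)}$ of a rational with denominator bounded by $M(\delta,d)$; it then writes down an explicit block ($M!$ copies of $2$, then $M!$ copies of $1$, then $0$'s), observes that realizing it forces large discrepancy and hence good rational approximation, and derives a contradiction by noting that $\lfloor P(\ell M!) + \eta_{\ell M!} \rfloor$ for $\ell=1,2,3$ would then have to lie in an interval of length $<1/(3m) \cdot m$ yet take three distinct residues --- this is exactly where $m\ge 3$ is used. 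Your argument is \emph{non-constructive}: you count the blocks realizable by \emph{any} $(P,\eta)$ by decomposing the parameter torus $(\alpha_k/m \bmod 1)_k \in [0,1)^{d+1}$ into the cells of a hyperplane arrangement of size polynomial in $N$, showing each cell contributes at most $2^N$ blocks, and concluding by pigeonhole since $\mathrm{poly}(N)\cdot 2^N < m^N$ for $m\ge 3$ and $N$ large; here $m\ge 3$ enters in the final comparison $2 < m$. The key technical point you correctly identify is that without the ``regime'' hyperplanes (tracking whether $\{P(n)\}$ is within $1/20$ of an integer) one only gets a $3^N$ per-cell bound, which is insufficient for $m=3$: for a \emph{fixed} $P$ each $n$ admits at most two residues, but as $P$ varies within a cell the identity of the two values could otherwise change. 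One small point worth spelling out (it does not affect correctness): the achievability sets in $\beta$-space are boolean combinations of half-open slabs, so to cover $\beta$ lying on a hyperplane one should count faces of all dimensions of the arrangement rather than only the open cells; since the indicator conditions are still constant on each face, and the face count is still $O(M^{d+1})$, the estimate is unchanged. Compared with the paper's proof, yours is shorter, more elementary (it bypasses the discrepancy/rational-approximation machinery entirely), and does not rely on Theorem~\ref{thm_pol_small_coeffs}; the trade-off is that it produces no explicit missing block, whereas the paper's argument does, and the paper's intermediate Theorem~\ref{thm_pol_small_coeffs} is reused in the more delicate $m=2$ case (Theorem~\ref{mis2}), so your approach would not directly supersede it there.
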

\begin{proof}
We let $M = M(1/(20m), d)$ (where $M(.,.)$ was defined in Theorem~\ref{thm_pol_small_coeffs}) and $N \ge (40mdM!)^{(d+3)!}$ be integers satisfying  Theorem \ref{thm_pol_small_coeffs} (with $d=d$ and $\delta =1/(20m)$) and define the block $B$ to consist of $M!$ digits $2$ followed by $M!$ digits $1$ followed by $N-2M!$ digits $0$.

We assume that there exists a polynomial $P$ for which (\ref{notB}) does not hold; in particular, for any $n \in [2M!+1, N]$ we have $\lfloor P(n) + \eta_n\rfloor \equiv 0 \,(\mod m)$. We let $R(x) = P(x)/m$ and $\beta_n = \eta_n/m$. Since $N \ge 8M!$ the discrepancy of the sequence $(R(n) + \beta_n)_{1 \le n\le N}$ is larger than $1/2$. We can thus apply Theorem \ref{thm_pol_small_coeffs} with $\delta = 1/(20m)$. Let us write $R(x) = \sum_{k=1}^{d} \alpha_k x^k$; for any $k$ there exist coprime rational integers $p_k$, $q_k$ with $1 \le q_k \le M$ and
$$
\abs{\alpha_k - \frac{p_k}{q_k}} \le N^{-\frac{(k+2)!}{2(d+2)!}} \le \frac{1}{20md\cdot (2M!)^{d}},
$$
where the last inequality comes from the choice of $N$.

For $k \in [1, d]$ we let $\varepsilon_k = \alpha_k -p_k/q_k$ and $r(x) = \alpha_0 + \sum_{k=1}^d \varepsilon_k (M!)^k x^k$. Since $M!/q_k \in \N$, we have for any integer $\ell$
$$
R(\ell M!) \equiv r(\ell) \, (\mod 1).
$$
For $\ell \in \{1, 2\}$ we have
 \begin{align*}
    \abs{r(0) - r(\ell)} \leq \sum_{k=1}^{d} \abs{\varepsilon_k} (M!)^k \ell^k
      \leq \sum_{k=1}^{d} \frac{1}{20 md \cdot (2 M!)^d} (M!)^{d} 2^d
      \leq \frac{1}{20m}.
  \end{align*}

For $\ell \in \{1, 2, 3\}$ we have $\abs{r(\ell) + \beta_{\ell M!} -r(0)} \le 1/(10m)$, which implies that the three real numbers $r(\ell) + \beta_{\ell M!}$ belong to an interval of length $1/(5m) < 1/(3m)$. This relation is incompatible with the fact that $\{\lfloor P(\ell M!) + \eta_{\ell M!} \rfloor \colon 1 \le \ell \le 3\}$ takes three different values. 
\end{proof}

\subsection{Non uniformity modulo $m\ge2$ of smoothly perturbed polynomials}

The proof of the previous result makes a crucial use of the fact that we can find at least three different digits in base $m$. Indeed, the observation that for any sequence $(b_n)_n \in\{0, 1\}^{\N}$, there exists a sequence $(\varepsilon_n)_n$ tending to $0$ as quickly as we wish such that for any $n$ we have $\lfloor 2n + 1 + \varepsilon_n\rfloor \equiv b_n \,(\mod m)$ shows that Theorem \ref{mmorethan2} cannot be extended without modification to the case when $m=2$. Theorem \ref{mis2}  shows that the case $m=2$ can be treated if we add some regularity condition. The next easy lemma explains which regularity we require.

\begin{lemma}\label{monotonicity}
Let $m \ge 2$ be an integer and $x_1, x_2, x_3$ be a monotonic sequence of real numbers such that $\abs{x_3-x_1} <1$. The triplet
 of the residues modulo $m$ of $\lfloor x_1 \rfloor, \lfloor x_2 \rfloor$ and $ \lfloor x_3 \rfloor$ cannot be $(0, 1, 0)$ nor $(1, 0, 1)$.
\end{lemma}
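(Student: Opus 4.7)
The plan is a short case analysis using the monotonicity together with the bound $|x_3 - x_1| < 1$. By symmetry (replacing the sequence by its reverse if necessary), I may assume $x_1 \le x_2 \le x_3$. Then the monotonicity of the floor function gives $\lfloor x_1 \rfloor \le \lfloor x_2 \rfloor \le \lfloor x_3 \rfloor$, and since $\lfloor x_3 \rfloor - \lfloor x_1 \rfloor$ is a non-negative integer strictly less than $x_3 - x_1 + 1 < 2$, it must equal $0$ or $1$.

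Now observe that both of the forbidden patterns $(0,1,0)$ and $(1,0,1)$ share two features: the outer residues agree, while the middle residue differs from them. Suppose one of these triples were realized. The congruence $\lfloor x_1 \rfloor \equiv \lfloor x_3 \rfloor \pmod m$ together with $\lfloor x_3 \rfloor - \lfloor x_1 \rfloor \in \{0,1\}$ and $m \ge 2$ forces $\lfloor x_3 \rfloor - \lfloor x_1 \rfloor = 0$. But then the monotonically sandwiched integer $\lfloor x_2 \rfloor$ must also equal $\lfloor x_1 \rfloor$, so in particular $\lfloor x_2 \rfloor \equiv \lfloor x_1 \rfloor \pmod m$, contradicting the pattern.

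There is essentially no obstacle: the lemma is a one-line observation about floors of slowly varying monotonic sequences, and the proof amounts to ruling out the only arithmetic possibility by an integrality argument using $m \ge 2$.
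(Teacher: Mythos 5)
Your proof is correct and follows essentially the same route as the paper's: reduce to the non-decreasing case, observe that $\lfloor x_1\rfloor \le \lfloor x_2\rfloor \le \lfloor x_3\rfloor \le \lfloor x_1\rfloor + 1$, use the congruence of the outer terms together with $m\ge 2$ to force $\lfloor x_3\rfloor = \lfloor x_1\rfloor$, and then conclude $\lfloor x_2\rfloor = \lfloor x_1\rfloor$ by the sandwich, contradicting the middle residue.
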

\begin{proof}
We assume that the sequence $x_1, x_2, x_3$ is non-decreasing. We have
$$
\lfloor x_1 \rfloor \le \lfloor x_3 \rfloor \le \lfloor x_1 + 1 \rfloor = \lfloor x_1 \rfloor +1.
$$
Since $\lfloor x_1 \rfloor$ and $\lfloor x_3 \rfloor$ have the same residue modulo $m$, they are equal. We have $\lfloor x_1 \rfloor \le \lfloor x_2 \rfloor \le \lfloor x_3 \rfloor$, which implies $\lfloor x_1 \rfloor = \lfloor x_2 \rfloor$, a contradiction. The case when the sequence $x_1, x_2, x_3$ is non increasing is treated in a similar way.
\end{proof}

\begin{theorem}\label{mis2}
Let $m$ and $d$ be positive integers with $m \ge 2$. There exist a positive integer $N=N(m,d)$ and a block $B \in \{0, 1\}^N$ such that for any real polynomial $P$ of degree $d$ and any real function $\eta \in \mathcal{C}^{d+1}([1, N])$ such that
\begin{equation}\label{conditioneta}
 \forall t \in [1, N] \colon \abs{\eta(t)} \le 1/20 \; \text{ and }\; \eta^{(d+1)}(t) \neq 0,
\end{equation}
there exists $n \in [1, N]$ such that
\begin{equation}\label{notB2}
\lfloor P(n) + \eta(n)\rfloor \not\equiv b_n\, (\mod m).
\end{equation}
\end{theorem}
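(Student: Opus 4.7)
The plan is to follow the skeleton of the proof of Theorem~\ref{mmorethan2}, replacing its \emph{three distinct residues} step (unavailable here since $B$ must lie in $\{0,1\}^N$, and moreover vacuous for $m=2$) by Lemma~\ref{monotonicity}; the smoothness hypothesis on $\eta$ is exactly what powers this substitution.

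With $M = M(1/(20m), d)$ from Theorem~\ref{thm_pol_small_coeffs}, fix an integer $K \geq 2d+3$ and let $N = N(m,d)$ be a sufficiently large multiple of $K \cdot M!$ (large enough for the bounds below to become sharp). Define $B \in \{0,1\}^N$ by putting $b_{kM!} = k \bmod 2$ for $1 \leq k \leq K$ and $b_n = 0$ at all other positions. Suppose for contradiction that an admissible pair $(P,\eta)$ realizes $B$; set $R = P/m$ and $\beta = \eta/m$. Since $b_n = 0$ at all but at most $\lceil K/2\rceil$ positions, the fractional part $\{R(n)+\beta(n)\}$ lies in $[0,1/m)$ except at those few points, which forces $D_N(R(1)+\beta(1),\ldots,R(N)+\beta(N)) > 4\delta = 1/(5m)$ for $N$ large. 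Theorem~\ref{thm_pol_small_coeffs} then yields coprime pairs $(p_k, q_k)$ with $1 \leq q_k \leq M$, $\alpha_k = p_k/q_k + \varepsilon_k$, and $|\varepsilon_k| \leq N^{-(k+2)!/(2(d+2)!)}$ for each $1 \leq k \leq d$, where the $\alpha_k$ are the non-constant coefficients of $R$.

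Next I transfer the parity information at the sparse positions onto a smooth proxy. Set $r(t) = \alpha_0 + \sum_{k=1}^d \varepsilon_k (M!)^k t^k$ and $g(t) = m\, r(t) + \eta(tM!)$ on $[1,K]$. Because $q_k \mid M!$, each $(p_k/q_k)(M!)^k$ is an integer, so $R(\ell M!) \equiv r(\ell) \pmod 1$, and hence $\lfloor P(\ell M!) + \eta(\ell M!)\rfloor \equiv \lfloor g(\ell)\rfloor \pmod m$ for every integer $\ell \in [1,K]$. Running the same computation as in the proof of Theorem~\ref{mmorethan2} uniformly over $\ell \leq K$ shows $m|r(\ell)-r(\ell')| \leq 1/20$ once $N$ is sufficiently large in terms of $m,d,K$; combined with $|\eta| \leq 1/20$ this bounds $|g(\ell)-g(\ell')|$ by $3/20 < 1$ for all $\ell,\ell' \in \{1,\ldots,K\}$.

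The main obstacle, and the genuinely new ingredient, is to exhibit a triple of consecutive integers in $\{1,\ldots,K\}$ on which $g$ is monotonic. Since $r$ has degree $d$, $g^{(d+1)}(t) = (M!)^{d+1} \eta^{(d+1)}(tM!)$, and because $\eta^{(d+1)}$ is continuous and nowhere zero on the connected interval $[1,N]$ it has constant sign; hence $g^{(d+1)}$ has constant sign throughout $[1,K]$. Iterating Rolle's theorem gives inductively that $g^{(j)}$ has at most $d+1-j$ zeros in $[1,K]$, so $g'$ has at most $d$ zeros and $g$ has at most $d$ critical points. Each critical point lies in at most two of the open intervals $(k,k+2)$ with $k \in \{1,\ldots,K-2\}$, so at most $2d < K-2$ of these triples contain a critical point; at least one triple $(k, k+1, k+2)$ therefore lies wholly inside a monotonic stretch of $g$, producing a monotonic sequence $(g(k), g(k+1), g(k+2))$ of spread less than $1$. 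Lemma~\ref{monotonicity} then forbids $(\lfloor g(k)\rfloor, \lfloor g(k+1)\rfloor, \lfloor g(k+2)\rfloor) \bmod m$ from being $(0,1,0)$ or $(1,0,1)$; yet by construction these residues equal $(k, k+1, k+2) \bmod 2$, which is precisely one of those two forbidden patterns. The resulting contradiction completes the proof.
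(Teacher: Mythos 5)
Your proof is correct and follows essentially the same route as the paper's: extract rational approximations of the coefficients of $R = P/m$ via Theorem~\ref{thm_pol_small_coeffs}, transfer to a smooth proxy $g(t) = mr(t) + \eta(tM!)$ with small spread, use the nonvanishing $(d{+}1)$-st derivative and Rolle's theorem to find a monotonic triple among the integers where $B$ alternates, and invoke Lemma~\ref{monotonicity}. The only difference is cosmetic bookkeeping in the block construction ($b_{kM!} = k \bmod 2$ for $k \le K$ with a free $K \ge 2d+3$, versus the paper's $b_{2kM!} = 1$ for $k \le 2d$); your explicit counting of bad triples is in fact marginally more careful than the paper's fixed range $\ell_0 \in [1,4d]$, which is tight when $d=1$.
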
\begin{proof}
We let $M = M(1/(20m), d)$ (where $M(.,.)$ was defined in Theorem~\ref{thm_pol_small_coeffs}) and $N \ge (40mdM!)^{(d+3)!}$ be integers satisfying  Theorem \ref{thm_pol_small_coeffs} (with $d=d$ and $\delta =1/(20m)$) and define the block $B$ to consist of $N$ integers almost all of them being equal to $0$, with the exception that, for $k= 1, 2, \ldots, 2d$ one has $b_{(2kM!)} = 1$.

We assume that there exists a polynomial $P$ for which (\ref{notB2}) does not hold; in particular, we have $\Card \{ n \in [1, N] \colon \lfloor P(n) + \eta(n)\rfloor \equiv 0 \,(\mod m)\} = N-2d$. We let $R(x) = P(x)/m$ and $\beta(x) = \eta(x)/m$. The discrepancy of the sequence $(R(n) + \beta_n)_{1 \le n\le N}$ is larger than $1/3$. We can thus apply Theorem \ref{thm_pol_small_coeffs} with $\delta = 1/(20m)$. Let us write $R(x) = \sum_{k=1, d} \alpha_k x^k$; for any $k$ there exist coprime rational integers $p_k$, $q_k$ with $1 \le q_k \le M$ and
$$
\abs{\alpha_k - \frac{p_k}{q_k}} \le N^{-\frac{(k+2)!}{2(d+2)!}} \le \frac{1}{20md\cdot (4dM!)^{d}},
$$
where the last inequality comes from the choice of $N$.

For $k \in [1, d]$ we let $\varepsilon_k = \alpha_k -p_k/q_k$ and $r(x) = \alpha_0 + \sum_{k=1}^d \varepsilon_k (M!)^k x^k$. Since $M!/q_k \in \N$, we have for any integer $\ell$
$$
R(\ell M!) \equiv r(\ell) \, (\mod 1).
$$

We define a function $f$ by the relation
$$
\forall t \in [1, 4d] \colon f(t) = m r(t) + \eta(tM!).
$$
For any $t \in [1, N]$, we have
$$
\abs{f(t)-f(0)} \le m\sum_{k=1}^{d} \abs{\varepsilon_k} (M!)^k \ell^k +2\times \frac{1}{20}
      \leq \sum_{k=1}^{d} \frac{1}{20 d \cdot (4dM!)^d} (M!)^{d} (4d)^d + \frac{1}{10}
      \leq \frac{3}{20}
$$
and so for any $t_1$ and $t_2$ one has $\abs{f(t_1)-f(t_2)} \le \frac{3}{10}$.\\

Since $r$ is a polynomial of degree $d$, we have $f^{(d+1)}(t) = (M!)^{d+1} \eta^{(d+1)}(tM!) $, which is different from $0$ by (\ref{conditioneta}). By repeated applications of Rolle's theorem, we find that $f'$ vanishes at most $d$ times on $[1, 4d]$: there exists at least an integer $\ell_0 \in [1, 4d]$ such that the sequence $f(\ell_0), f(\ell_0+1), f(\ell_0+2)$ is monotonic. 

By Lemma \ref{monotonicity}, the triplet $(\lfloor f(\ell_0) \rfloor, \lfloor f(\ell_0 + 1) \rfloor, \lfloor f(\ell_0 +2) \rfloor)$ taken modulo $m$ cannot be $(0, 1, 0)$ nor $(1, 0, 1)$.

We finally notice that, for any integer $\ell$, the difference between $f(\ell)$ and $P(\ell M!) + \eta(\ell m!)$ is a multiple of $m$; thus the triple $(\lfloor P(\ell_0M!) + \eta(\ell_0M!) \rfloor, \lfloor P((\ell_0+1)M!) + \eta((\ell_0+1)M!) \rfloor, \lfloor P((\ell_0+2)M!) + \eta((\ell_0+2)M!) \rfloor)$ taken modulo $m$ cannot be $(0, 1, 0)$ nor $(1, 0, 1)$, contrary to our assumption. This contradiction proves Theorem \ref{mis2}.

\end{proof}

\subsection{Proof of Theorem  \ref{thm_not_normal}}

Let $m \ge 2$ be an integer and $c>1$ a real number which is not an integer. We let $d=\lfloor c \rfloor$ and consider the number $N=N(m,d)$ and the block $B$ given by Theorem \ref{mis2}. For any positive real numbers $X$ and $t$, we consider the Taylor approximation of order $d$ of $(X+t)^c$, namely
$$
(X+t)^c = P_X(t) + \eta_X(t), \text{ where } P_X(t)=\sum_{k=0}^d \frac{c^{\underline{k}}X^{c-k}}{k!}t^k \text{ and } \mid\eta_X(t)\mid \le c t^{(d+1)}X^{\{c\}-1}.
$$

For any sufficiently large integer $X$, say $X \ge X_0$, we have $\mid\eta_X(t)\mid \le 1/20$ for any $t$ in $[1, N]$. Moreover, the $(d+1)$-th derivative of $\eta_X$ is the $(d+1)$-th derivative of $t \mapsto (X+t)^c$ and thus does not vanish. We can thus apply Theorem \ref{mis2}: there exists a block $B$ of length $N$ which does not occur in the sequence of the residues modulo $m$ of the sequence $(\lfloor n^c \rfloor)_{n \ge X_0}$. Let $U$ be in $\{0, 1, \cdots, m-1\}^{X_0}$; the word $UB$, concatenation of the words $U$ and $B$, never occurs in the sequence of the residues modulo $m$ of the sequence $(\lfloor n^c \rfloor)_{n \ge 0}$. This ends the proof of Theorem \ref{thm_not_normal}. \\

	We end this section by noticing that for $m\ge 3$ the more general Theorem \ref{mmorethan2} is sufficient for proving Theorem \ref{thm_not_normal}.

\section{Proof of Theorem~\ref{thm_complexity}}
\subsection{An upper bound for the complexity}
Assume that $L\geq 1$ (a block length) and $\varepsilon=1/(4L^2)$.
Write $\mathbf a_n=\lfloor n^c\rfloor\bmod m$.
By Taylor's theorem (consider the second derivative of $x^c$, which tends to $0$ like $x^{c-2}$) there exists a constant $C$, only depending on $c$, such that for $a\geq 4/(2-c)$ and $N\geq CL^a$ the following is satisfied:
There are reals $\alpha$, $\beta$ such that
\[0\leq n^c-(n\alpha+\beta) < \varepsilon\]
for $N\leq n<N+L$.
We also assume that $\alpha$ is irrational, which is no loss of generality.
This technical condition will be used later, when we apply the three gaps theorem.
The number of different factors in $\mathbf a$ of length $L$ occurring at positions $N<CL^a$ is trivially bounded by $CL^a$, which gives the first term of the maximum in the theorem.

It remains to consider start positions $N\geq CL^a$, where linear approximation of quality $\varepsilon$ can be applied.
Any block $(\mathbf a_N,\ldots,\mathbf a_{N+L-1})$ is obtained by starting from a block $b=(\lfloor N\alpha+\beta\rfloor\bmod m,\ldots,\lfloor (N+L-1)\alpha+\beta\rfloor\bmod m)$ and possibly modifying this sequence at indices $n$ such that $1-\varepsilon\leq \{n\alpha+\beta\}<1$.
This possible modification consists in adding $1$ modulo $m$.

We begin by estimating the number of factors of $\lfloor n\alpha+\beta\rfloor \bmod m$. Each such block corresponds to a finite Sturmian word by considering the sequence of differences $\lfloor (n+1)\alpha+\beta\rfloor - \lfloor n\alpha+\beta\rfloor$.
Note that such a sequence of differences corresponds to at most $m$ factors of the Beatty sequence.
This follows by taking the first element $\lfloor n_0\alpha+\beta\rfloor\bmod m$ of the considered factor into account and considering partial sums.
Using Mignosi~\cite{M1991} we can estimate the number of factors $b$ by $O(L^3)$, where here and in the following the implied constant may depend on $m$.

Consider the interval $I=[1-\varepsilon,1)$ and the set
\[A=\{n:N\leq n<N+L,\{n\alpha+\beta\}\in I+\mathbb Z\}.\]
We make use of the three gaps theorem (see, for example, the survey by Alessandri and Berthe~\cite{AB1998}, in particular the remark in section~4), which implies that there are at most three differences $a_2-a_1$ between consecutive elements of the set $B=\{n\in\mathbb N: \{n\alpha+\beta\}\in I+\mathbb Z\}$ and if three differences occur, the largest one is the sum of the smaller ones.

We distinguish between three cases.

\noindent (1) All gaps are $\geq L$. In this case, $\lvert A\rvert \leq 1$, so that we have to change the block $b$ at at most one position by adding $1$ modulo $m$, as noted above.
This gives a factor of $L+1$, which implies that this case contributes $O(L^{3+1})$ many cases.

\noindent (2) Exactly one gap is $<L$.
In this case $A$ is an arithmetic progression,
consisting of the elements $n_j=n_0+jd$ for some $n_0=\min A$ and $d\geq 1$, and $0\leq j<k$.
Set $x_j=\{n_j\alpha+\beta\}$ and $\delta=x_1-x_0$.

First, we want to show that $(x_0,\ldots,x_{k-1})$ is an arithmetic progression with difference $\delta$. Note that $\lvert \delta\rvert<\varepsilon<1/2$.
Suppose that we have already shown that $(x_0,\ldots,x_{j-1})$ is an arithmetic progression.
Clearly we have $x_j=x_{j-1}+\delta+r$ for some $r\in\mathbb Z$.
Suppose that $r\neq 0$. Then $\lvert x_j-x_{j-1}\rvert>1-\varepsilon$.
Since both $x_j$ and $x_{j-1}$ are elements of the interval $[1-\varepsilon,1)$, this is a contradiction to $\varepsilon<1/2$.

Next, we prove that the set $J=\{i<k: \lfloor n_i^c\rfloor>\lfloor n_i\alpha+\beta\rfloor\}$ is an interval.
To this end, note that $\lfloor n_i^c\rfloor >\lfloor n_i\alpha+\beta\rfloor$ if and only if $n_i^c-(n_i\alpha+\beta)\geq 1-\{n_i\alpha+\beta\}$, which is the case if and only if
$n_i^c-(n_i\alpha+\beta)+\{n_0\alpha+\beta\}+i\delta-1\geq 0$.
Note that the left hand side is a convex function of $i$, which implies the assertion.

In order to obtain the block 
$(\mathbf a_N,\ldots,\mathbf a_{N+L-1})$ from the block $b=(\lfloor N\alpha+\beta\rfloor\bmod m,\ldots,\lfloor (N+L-1)\alpha+\beta\rfloor\bmod m)$,
we modify $b$ at indices $n_j$ for $j\in J$, where $J$ is the interval obtained above.
These indices form an arithmetic progression in $[N,N+L-1]$, of which there are $O(L^3)$ many.
(Note that in fact $L^2\log L$ is sufficient.)
This implies a contribution of $O(L^{3+3})$ for this case.

\noindent (3) There exist two gaps $g_1<g_2<L$. We are going to show that this case cannot occur.
We first note that $g_1\alpha\not\in\mathbb Z$. Otherwise, the set $B$ would contain an arithmetic progression with difference $g_1$, therefore the gap $g_2$ would not occur, a contradiction.
It follows that $0<\lVert g_1\alpha\rVert<\varepsilon$.
Choose $n_1,n_2\in\mathbb N$ in such a way that $n_2-n_1=g_1$ and $n_1,n_2\in B$.
Consider the $g_1$ points $\{n_1\alpha+\beta\},\{(n_1+1)\alpha+\beta\},\ldots,\{(n_1+g_1-1)\alpha+\beta\}$.
These points dissect the torus into $g_1$ many intervals.
Therefore there is an interval $J=[x,y)$ in $\mathbb R$ of length $\geq 1/g_1\geq 1/L= 4L\varepsilon$ such that $n\alpha+\beta\not\in J+\mathbb Z$ for $n_1\leq n<n_2$.
Assume that $\{g_1\alpha\}<\varepsilon$, the case $\{g_1\alpha\}>1-\varepsilon$ being analogous.
Then 
\begin{align*}
\{n\alpha+\beta:n_1\leq n<n_1+2Lg_1\}
&=\bigcup_{0\leq k<2L}\{n\alpha+\beta:kn_1\leq n<n_1+(k+1)g_1\}
\\&\subseteq
\{n\alpha+\beta:n_1\leq n<n_1+g_1\}
+
\bigcup_{0\leq k<2L}\bigl(kg_1\alpha+\mathbb Z\bigr)
\\&\subseteq \mathbb R\setminus(J'+\mathbb Z),
\end{align*}
where $J'=[x+2L\varepsilon,y)$ has length $\geq 2\varepsilon$.
We now use the fact that $0\neq \lVert g_1\alpha\rVert<\varepsilon$ in order to shift the interval $J'$ over the interval $[1-\varepsilon,1)$ by using a multiple of $\alpha$.
Set $\delta=(1-2\varepsilon)-(x+2L\varepsilon)$ and assume that $n_0$ is such that $n_0\alpha\in \delta+[0,\varepsilon)+\mathbb Z$.
Then for all $n$ such that $n_1+n_0\leq n<n_1+n_02Lg_1$ we have
\begin{align*}
n\alpha+\beta&\in\mathbb R\setminus(J'+\mathbb Z)+n_0\alpha
\\&\subseteq [y,x+2L\varepsilon+1)+\delta+[0,\varepsilon)+\mathbb Z
\\&\subseteq [x+2L\varepsilon+2\varepsilon,x+2L\varepsilon+1)+(1-2\varepsilon)-(x+2L\varepsilon)+[0,\varepsilon)+\mathbb Z
\\&\subseteq [0,1-\varepsilon)+\mathbb Z.
\end{align*}
It follows that the sequence $(n\alpha+\beta)_{n\geq 0}$ does not visit $I+\mathbb Z$ for at least $2L$ many steps, which is a contradiction to the three gaps theorem: we proved the existence of a gap $\geq 2L$, but it is the sum of the smaller ones, therefore is at most $2L-1$.

Therefore case (3) does not occur, and the first part of the theorem is proved.

\subsection{A lower bound for the complexity}
We use Mignosi~\cite{M1991} again, this time we use the fact that there are at least $Ck^3$ Sturmian words of length $k$.
Let $a\in\{0,1\}^k$ be a subword of a Sturmian word.
There exist an irrational $\alpha_0<1$ and some $\beta_0$ and $n$ such that
$a_\ell=\lfloor (n+\ell+1)\alpha_0+\beta_0\rfloor-\lfloor (n+\ell)\alpha_0+\beta_0\rfloor$
for $0\leq\ell<k$.
Let $b\in\{0,\ldots,m-1\}^k$ be the sequence of partial sums modulo $m$;
moreover, let sequences $b^{(j)}$ be defined by $b^{(j)}(\ell)=b(\ell+j)\bmod m$.
Then one out of $b=b^{(0)},\ldots,b^{(m-1)}$ appears as a subword of $L(\beta_0)$, where $L(\beta)=(\lfloor n\alpha_0+\beta\rfloor\bmod m)_{n\geq 0}$.
By irrationality of $\alpha$ and the three gap theorem it is not difficult to show that there is some $B$ such that, for all $\beta$, every subword of $L(\beta)$ of length $B$ contains one subword taken from the set $\{b^{(0)},\ldots,b^{(m-1)}\}$.
(Sturmian words are \emph{uniformly recurrent}.)
We are going to show that $(\lfloor n^c\rfloor \bmod m)_{n\geq 0}$ contains a subword of $L(\beta)$ of length $B$, which establishes our claim.

It is elementary to show that there exists an $\varepsilon>0$ and some open interval $I$, such that the following holds:
for all $\beta$ and $n$ such that $n\alpha_0+\beta\in I+\mathbb Z$, we have
$\lVert (n+m)\alpha_0+\beta\rVert>\varepsilon$ for all $m<B$.

We use the denseness of $n\alpha_0+\beta\bmod 1$:
let $A$ be so large that for all $\beta$ and $n$ we have
$(n+\ell)\alpha_0+\beta\in I+\mathbb Z$ for some $\ell<A$.

Finally, let $x_0$ be so large such that $\frac 12f''(x)(A+B+1)^2<\varepsilon$ for $x\geq x_0$, where $f(x)=x^c$.
We approximate $x^c$ by a linear function $x\alpha+\beta$ at some point $x\geq x_0$ satisfying $\alpha=f'(x)\equiv \alpha_0\bmod m$.

We obtain some $\ell<A$ such that
$\lVert (\lceil x\rceil+\ell+m)\alpha+\beta\rVert>\varepsilon$ for all $m<B$.
By Taylor's formula it follows that $\lfloor (\lceil x\rceil+\ell+m)^c\rfloor =\lfloor (\lceil x\rceil +\ell+m)\alpha+\beta\rfloor$ for all $m<B$, which shows that there is a subword of $L(\beta)$ of length $B$ contained in $(\lfloor n^c\rfloor\bmod m)_{n\geq 0}$.

It follows that $\lfloor n^c\rfloor \bmod m$ has complexity at least $Ck^3$, which shows (using Allouche and Shallit~\cite[Corrolary~10.4.9]{AS2003}) that this sequence is not morphic. In particular, it is not an automatic sequence.

\section{Proof of Theorem~\ref{thm_sarnak}}

Theorem~\ref{thm_sarnak} is an easy corollary of the following result

\begin{theorem}\label{thm_daboussi-sarnak}
Suppose that $c>1$ is not an integer. Then we have for every multiplicative
function $f(n)$ with $|f(n)|\le 1$ and for every $\alpha\in \mathbb{Q}\setminus\mathbb{Z}$
\[
\sum_{n< N} f(n)\e\left({\alpha\lfloor n^c \rfloor}\right) = o(N), \qquad (N\to\infty).
\]
\end{theorem}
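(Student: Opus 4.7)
The plan is to apply the Daboussi--K\'atai criterion (or its Bourgain--Sarnak--Ziegler variant). Setting $a_n := \e(\alpha\lfloor n^c\rfloor)$, the criterion reduces the statement of Theorem~\ref{thm_daboussi-sarnak} to the correlation bound
\[
\frac{1}{N}\sum_{n\le N}\e\bigl(\alpha(\lfloor(p_1n)^c\rfloor - \lfloor(p_2n)^c\rfloor)\bigr) \longrightarrow 0 \qquad (N\to\infty),
\]
uniformly as $p_1<p_2$ range over distinct primes tending to infinity. The mean value $\frac{1}{N}\sum_{n\le N}a_n$ tends to $0$ (this follows from the $k=1$ case of Theorem~\ref{thm_k_uniform}: $\lfloor n^c\rfloor$ is equidistributed modulo $q$, so $\sum_{n\le N} \e(\alpha\lfloor n^c\rfloor) = o(N)$ for any non-integer rational $\alpha$), which, combined with the correlation bound, delivers the conclusion for every multiplicative $f$ with $|f|\le 1$. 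Theorem~\ref{thm_sarnak} then follows at once by Fourier-expanding $G:\{0,\dots,m-1\}\to\mathbb C$ over the additive characters of $\mathbb Z/m\mathbb Z$, applying Theorem~\ref{thm_daboussi-sarnak} to each non-trivial character $\ell/m$ with $1\le\ell\le m-1$, and absorbing the trivial character into the hypothesis $\sum_n f(n) = o(N)$.

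To verify the correlation bound, write $\alpha=a/q$ in lowest terms with $q\ge 2$ and factor $\e(\alpha\lfloor y\rfloor) = \e(\alpha y)\cdot \e(-\alpha\{y\})$. The $1$-periodic factor is approximated by a Vaaler (or Beurling--Selberg) trigonometric polynomial of degree $H$,
\[
\e(-\alpha\{y\}) = \sum_{|h|\le H} c_h\, \e(hy) + E_H(y),\qquad |c_h|\ll(1+|h|)^{-1},
\]
with error $E_H$ of $L^1$-mean $O(1/H)$. Substituting this expansion into both factors of the correlation and expanding the product turns the task into controlling a weighted linear combination (with weights $c_{h_1}\overline{c_{h_2}}$ of total absolute mass $O(\log^2 H)$) of single-variable exponential sums
\[
T(h_1, h_2;N) := \sum_{n\le N} \e\bigl(\beta(h_1, h_2)\,n^c\bigr),\qquad \beta(h_1, h_2) := (\alpha+h_1)p_1^c - (\alpha+h_2)p_2^c,
\]
plus an error of size $O(N/H)$ stemming from $E_H$ together with the discrepancy bounds of Section~3.

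For all sufficiently large prime pairs $p_1<p_2$, the coefficient $\beta(h_1, h_2)$ is nonzero throughout the range $|h_i|\le H$: its vanishing would force $(p_2/p_1)^c$ to equal the bounded-denominator rational $(\alpha+h_1)/(\alpha+h_2)$, which fails generically since $c$ is non-integer. Each $T(h_1, h_2;N)$ is then $o(N)$ by the van der Corput / Kusmin--Landau arguments used in the proof of Proposition~\ref{lem_3}, applied to the function $x\mapsto\beta x^c$ whose $k$-th derivative has size $|\beta|\,N^{c-k}$. Choosing $H$ as a small power of $N$ and exploiting the $(1+|h|)^{-1}$ decay of the Vaaler coefficients to damp the contributions of near-resonant pairs yields the required $o(N)$ bound. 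The main obstacle is the quantitative uniformity of $T(h_1,h_2;N) = o(N)$ over $|h_i|\le H$: when $|\beta|$ is small but nonzero the Weyl-type estimate deteriorates, and balancing the truncation parameter $H$ against this degradation while preserving $O(N/H)$ control of the Vaaler error is the most delicate part of the argument.
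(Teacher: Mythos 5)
Your overall strategy mirrors the paper's: reduce to the bilinear correlation bound via the Daboussi--K\'atai criterion, expand the fractional-part factor by a Vaaler-type construction, and bound the resulting exponential sums $\sum_n \e(\beta(h_1,h_2)\,n^c)$ by van der Corput. (The paper approximates indicator functions of intervals rather than the $1$-periodic factor $\e(-\alpha\{y\})$ directly, but this is cosmetic and leads to the same structure.) There are, however, two genuine gaps.

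First, the claim that $\beta(h_1,h_2)\neq 0$ for all $|h_i|\le H$ once $p_1,p_2$ are large is not a triviality to be dismissed with ``fails generically since $c$ is non-integer''. It amounts to showing $(p_1/p_2)^c\notin\mathbb{Q}$, and this can in fact fail for particular primes: for $c=\log 3/\log 2$ one has $2^c=3$. What is true is that it can fail only for finitely many prime pairs, and the paper devotes Lemma~\ref{lem41} to this, invoking the Six Exponentials Theorem of Lang and Ramachandra together with a separate argument that $c$ is irrational. Without this lemma the correlation bound has no basis.

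Second, you propose letting $H$ grow as a power of $N$ and you yourself flag the resulting difficulty: when $|\beta(h_1,h_2)|$ is small but nonzero the van der Corput bound degrades, and making this quantitative over $|h_i|\le H$ with $H\to\infty$ would require an effective irrationality measure for $(p_1/p_2)^c$, which you do not have. The paper avoids this entirely by keeping $H$ \emph{fixed}: for fixed $p,q,H$ the set $\{\beta(h_1,h_2): |h_i|\le H^3\}$ is finite and, by Lemma~\ref{lem41}, consists of nonzero reals, hence is bounded away from zero by some $\eta=\eta(p,q,H)>0$. Lemma~\ref{lem42} then gives $\sum_{n\le N}\e(\beta n^c)=o_{\,p,q,H}(N)$, leaving a Vaaler error of size $O(N/H)$. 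Letting $N\to\infty$ first and then $H\to\infty$ yields $S=o(N)$ for each admissible pair $(p,q)$, which is all that the K\'atai criterion requires. Thus the ``delicate balancing'' you identify can be sidestepped, and the real missing ingredient in your write-up is the transcendence lemma.
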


By the Daboussi-K\'atai criterion \cite{K1986} it is sufficient to prove 
\begin{equation}\label{eqn_toprove}
S := \sum_{n< N} \e\left( \alpha\left(\lfloor (pn)^c \rfloor - \lfloor (qn)^c \rfloor \right) \right) 
= o(N), \qquad (N\to\infty)
\end{equation}
for sufficiently large (and different) prime numbers $p,q$.

Actually it is important that we do not have to check (\ref{eqn_toprove}) for all pairs of (different) primes
$p,q$ since we have to exclude those cases where $(q/p)^c$ is rational. Fortunately this can
only occur for finitely many cases.

\begin{lemma}\label{lem41}
Suppose that $c>0$ is not an integer. Then there 
exists a constant $L> 0$ such that for all pairs of different primes 
$p,q > L$ we have
\[
(p/q)^c \not\in \mathbb{Q}.
\]
\end{lemma}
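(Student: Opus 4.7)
The plan is to split on the arithmetic nature of $c$: rational non-integer, irrational algebraic, and transcendental. In the first two cases I will in fact show $(p/q)^c \notin \mathbb{Q}$ for \emph{all} distinct primes $p,q$, while in the transcendental case I will prove that the set $T$ of ``bad'' pairs $\{p,q\}$ (those with $(p/q)^c \in \mathbb{Q}$) is finite; any $L$ larger than every prime appearing in $T$ will then do.

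For $c = u/v$ rational with $\gcd(u,v) = 1$ and $v \geq 2$, suppose $(p/q)^c = a/b$ in lowest terms; taking $v$-th powers yields $p^u b^v = q^u a^v$, and inspecting $p$-adic valuations on both sides (using $p \nmid q$ and $p \nmid b$, the latter since $p \mid a$) forces $v \mid u$, contradicting $\gcd(u,v)=1$ and $v \geq 2$. For $c$ algebraic irrational, the Gelfond--Schneider theorem applied to the algebraic base $p/q \notin \{0,1\}$ and the algebraic irrational exponent $c$ shows that $(p/q)^c$ is transcendental, in particular not rational.

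The main obstacle is the transcendental case. I first note that the relation ``$(p/q)^c \in \mathbb{Q}$'' is transitive, so primes partition into equivalence classes. The key tool is the \emph{six exponentials theorem} of Siegel--Lang--Ramachandra: if $(x_1,x_2,x_3)$ and $(y_1,y_2)$ are each $\mathbb{Q}$-linearly independent tuples of complex numbers, then at least one of the six numbers $\exp(x_i y_j)$ is transcendental. I will apply it with $y_1 = 1$, $y_2 = c$ (linearly independent because $c$ is irrational) and $x_i = \log(p_i/q_i)$ for three pairwise disjoint pairs $\{p_i,q_i\}$ of bad primes. The $x_i$ are $\mathbb{Q}$-linearly independent because their expansions in the $\mathbb{Q}$-basis $\{\log p : p \text{ prime}\}$ of $\log \mathbb{Q}_{>0}$ have disjoint support, while the six numbers $\exp(x_i y_j)$ are the three rationals $p_i/q_i$ together with the three values $(p_i/q_i)^c$, all rational by hypothesis. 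This is the required contradiction, so three pairwise disjoint bad pairs cannot coexist.

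It remains to deduce that $T$ is finite. View $T$ as a graph on the set of primes. If some prime $p_0$ had infinitely many neighbours in $T$, transitivity would place all of them in the same equivalence class as $p_0$; selecting six of these neighbours and grouping them into three disjoint pairs would furnish the forbidden configuration. Hence every vertex has finite degree in $T$, and a simple greedy extraction then shows that if $T$ contained infinitely many edges it would contain a matching of size three --- again contradicting the six exponentials theorem. Thus $T$ is finite, which completes the proof.
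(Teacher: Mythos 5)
Your proposal is correct, but it is organized quite differently from the paper's proof. The paper gives a single, unified application of the Six Exponentials Theorem for all non-integer $c>0$: assuming two bad pairs $(p_1,q_1),(p_2,q_2)$ exist together with a third $(p_3,q_3)$ having both primes larger, they verify the two linear-independence hypotheses — that $\log(p_1/q_1),\log(p_2/q_2),\log(p_3/q_3)$ are $\mathbb{Q}$-independent (unique factorization, using that the third pair is disjoint from the first two and those share at most one prime), and that $1$ and $c$ are $\mathbb{Q}$-independent, where the latter is proved by exactly the $p$-adic valuation argument you use in your rational case — and conclude immediately with $L=\max\{p_1,q_1,p_2,q_2\}$. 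You instead trichotomize on the arithmetic nature of $c$: for rational non-integer $c$ you give a self-contained valuation argument showing there are no bad pairs at all; for algebraic irrational $c$ you invoke Gelfond--Schneider, again ruling out all bad pairs; and only for transcendental $c$ do you use Six Exponentials (which would in fact suffice for every irrational $c$, so the Gelfond--Schneider case is logically redundant, though it is a clean observation). Your combinatorial conclusion is also heavier than the paper's: you insist on three \emph{pairwise disjoint} bad pairs to feed into Six Exponentials, and then need the equivalence-relation structure plus a finite-degree/matching argument to conclude that the set of bad pairs is actually finite, whereas the paper gets away with one asymmetric comparison and obtains the (weaker, but sufficient) statement that no bad pair has both primes large. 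Both arguments are sound; the paper's is more economical, while yours isolates the transcendence inputs more explicitly and proves a slightly stronger finiteness statement.
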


\begin{proof}
If there is at most one pair of different primes $p,q$ such that 
$(p/q)^c \in \mathbb{Q}$ then we set $L= 1$ or $L = \max\{p,q\}$.

Suppose next that there are two different pairs $(p_1,q_1)$, $(p_2,q_2)$ of primes with
$p_1 > q_1$, $p_2> q_2$
\[
(p_1/q_1)^c = r_1 \in \mathbb{Q} \quad\mbox{and}\quad (p_2/q_2)^c = r_2\in \mathbb{Q}
\]
and suppose that $(p_3,q_3)$ is another pair of different primes with
$p_3> q_3 > \max\{p_1,q_1,p_2,q_2\}$ such that
\[
(p_3/q_3)^c = r_3 \in \mathbb{Q}.
\]
Setting $\lambda_{11} = \log(p_1/q_1)$, $\lambda_{12} = \log(p_2/q_2)$, $\lambda_{13} = \log(p_3/q_3)$ and
$\lambda_{21} = \log r_1$, $\lambda_{22} = \log r_2$, $\lambda_{23} = \log r_3$ it follows that
\[
\frac{\lambda_{11}}{\lambda_{21}} =\frac{\lambda_{12}}{\lambda_{22}} =\frac{\lambda_{13}}{\lambda_{23}} = \frac 1c 
\]
or equivalently that the matrix
\[
M = \left( \begin{array}{ccc}  \lambda_{11} & \lambda_{12} & \lambda_{13} \\ \lambda_{21} & \lambda_{22} & \lambda_{23} 
\end{array} \right)
\]
has rank $1$.

By assumption it might be that one of $p_1,q_1$ coincides with
one of $p_2,q_2$ but not both. Hence, by unique factorization it follows that 
\[
\lambda_{11} = \log(p_1/q_1), \quad  \lambda_{12} = \log(p_2/q_2), \quad \lambda_{13} = \log(p_3/q_3)
\]
are linearly independent over the rationals. 

Furthermore we have the property that $c$ is irrational
or equivalently that $\lambda_{11}$ and $\lambda_{21}$ are linearly independent over
the rationals. Assuming the contrary it would have
\[
\left( \frac {p_1}{q_1} \right)^A = r_1^B
\]
for coprime integers $A,B$, that is, $c = A/B$. Recall that 
the primes $p_1$ and $q_1$ are different. 
Hence, $p_1$ has to appear on the right hand side, and due to the exponent $B$ it has
to appear with an integer multiple of $B$ as its multiplicity. However, due unique factorization
this multiplicity has to be $A$
which implies that $B=1$ and consequently that $c = A$ is an integer. But this is 
excluded by assumption.

Thus, by the Six Exponential Theorem by Lang~\cite{L1966} and Ramachandra \cite{Ra1967} implies that
the matrix $M$ has rank $2$. This leads to a contradiction and proves the lemma by
setting $L = \max\{p_1,q_1,p_2,q_2\}$.
\end{proof}

The next ingredient that we need is the following estimate for exponential sums.
\begin{lemma}\label{lem42}
Suppose that $c> 1$ is not an integer. Then we have uniformly for all real numbers $U$ with
$|U|\ge \eta$, where $\eta > 0$,  and $N\ge 1$
\[
\sum_{n\le N} e\left( U n^c \right) \ll |U| N^{1 - \frac{\| c\| }{2^c}}.
\]
\end{lemma}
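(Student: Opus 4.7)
Plan. The argument specialises the method of Proposition \ref{lem_3} to the single-term phase $f(x)=Ux^{c}$, for which $|f^{(k)}(x)|\asymp_{c}|U|x^{c-k}$ holds on every interval, with no multi-term cancellation to track. First dispose of the regime $|U|\ge N^{\|c\|/2^{c}}$: here the trivial bound gives $\bigl|\sum_{n\le N}\e(Un^{c})\bigr|\le N\le|U|N^{1-\|c\|/2^{c}}$. Henceforth we assume $|U|<N^{\|c\|/2^{c}}$.

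Split the sum dyadically, $\sum_{n\le N}\e(Un^{c})=\sum_{j\ge 0,\,2^{j}\le N}\sum_{2^{j}\le n<2^{j+1}}\e(Un^{c})+O(1)$. If we show that each block $\mathcal S(M):=\sum_{M\le n<2M}\e(Un^{c})$ is bounded by $\ll_{\eta,c}|U|M^{1-\|c\|/2^{c}}$ for every dyadic $M\le N/2$, then a geometric series in the dyadic scale $M=2^{j}$ produces the desired estimate. On $[M,2M]$ pick $k=\lfloor c\rfloor+1$, so that $c-k=\{c\}-1<0$ and $|f^{(k)}(x)|\asymp_{c}|U|M^{\{c\}-1}$; the standing assumption $|U|<M^{\|c\|/2^{c}}$ together with $\|c\|\le 1-\{c\}$ guarantees $|f^{(k)}|=o(1)$, so the van der Corput $k$-th derivative test (Theorem~2.8 of \cite{GK1991}) applies. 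Running it exactly as in the main case $0\le r\le L-3$ of the proof of Proposition \ref{lem_3} and grouping the three arising terms yields $\mathcal S(M)\ll_{c}|U|^{\alpha}M^{1-\|c\|/2^{c}}$ for some $\alpha\in(0,1]$; the lower bound $|U|\ge\eta$ then lets us upgrade $|U|^{\alpha}$ to $|U|$ at the cost of a constant depending only on $\eta$ and $c$.

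The main technical subtlety, as already seen in Proposition \ref{lem_3}, lies in choosing $k$ and optimising the output of Theorem~2.8 of \cite{GK1991} so as to recover the exponent $\|c\|/2^{c}$ exactly. The slight improvement over the exponent $\|c\|/2^{c+1}$ obtained in Proposition \ref{lem_3} reflects the absence here of the extra differencing step that was needed to handle possible cancellations of leading coefficients in the multi-term setting; no new difficulty arises from $U$ being real rather than rational, since only derivative size enters the van der Corput estimate. Summing over the $O(\log N)$ dyadic scales then delivers $\sum_{n\le N}\e(Un^{c})\ll_{\eta,c}|U|N^{1-\|c\|/2^{c}}$, completing the proof.
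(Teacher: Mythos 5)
Your overall plan --- trivial estimate in the regime $|U|\ge N^{\|c\|/2^{c}}$, dyadic decomposition, van der Corput derivative test on each block using $|f^{(k)}(x)|\asymp_{c}|U|x^{c-k}$, then a geometric sum --- is exactly the ``same lines as Proposition~\ref{lem_3}'' argument the paper alludes to, and it is sound. Two caveats deserve mention. First, for $1<c<2$ the single-term case corresponds to $r=L-2$ in the casework of Proposition~\ref{lem_3}, which invokes the second-derivative test (Theorem~2.2 of \cite{GK1991}), not Theorem~2.8; your write-up only cites the latter. Second, and more substantively, the claim that specialising to one term lets you recover $\|c\|/2^{c}$ ``exactly'' because no differencing-of-leading-coefficients is needed is not substantiated and is in fact false: having a single term forces $r=0$, which is the \emph{worst} case of Proposition~\ref{lem_3} (it maximises $Q=2^{\lfloor c\rfloor-1}$), so the first term of Theorem~2.8 delivers a saving of $(1-\{c\})/(4Q-2)$ with $4Q-2=2^{\lfloor c\rfloor+1}-2$. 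For $\{c\}$ small (e.g.\ $c=2.1$, where $4Q-2=6>2^{c}\approx 4.29$) this is strictly smaller than $\|c\|/2^{c}$; what your argument actually yields is the exponent $\|c\|/2^{c+1}$, the same as in Proposition~\ref{lem_3}. (The stated exponent in Lemma~\ref{lem42} is most likely the same slip in the paper; in the proof of Theorem~\ref{thm_daboussi-sarnak} only $o(N)$ is used, so nothing downstream is affected.) Finally, you should address the small dyadic scales explicitly: when $|U|M^{\{c\}-1}\ge 1$ the van der Corput estimate is trivial, but then $|U|\ge M^{1-\{c\}}\ge M^{\|c\|}\ge M^{\|c\|/2^{c}}$, so the trivial bound $|\mathcal S(M)|\le M\le |U|M^{1-\|c\|/2^{c}}$ already gives what is needed for those $M$.
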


\begin{proof}
The proof runs along the same lines as that of Proposition~\ref{lem_3}. 
\end{proof}

Now suppose that $p,q$ are different primes such that $(p/q)^c$ is irrational- Let 
$H$ be an arbitrary large number and observe first that we can slightly modify $S$
from (\ref{eqn_toprove}):
\begin{align*}
S &= \sum_{n< N} \e\left( \alpha\left(\lfloor (pn)^c \rfloor - \lfloor (qn)^c \rfloor \right) \right) \\
&= \sum_{n< N} e\left(\alpha \lfloor (pn)^c \rfloor - \alpha  \lfloor (qn)^c \rfloor\right) \\
&= \sum_{n< N} e\left(\alpha \left((pn)^c -(qn)^c \right) - \alpha \left( \{ (pn)^c \} - \{ (qn)^c \} \right) \right) \\
&= \sum_{0\le k_1,k_2 < H} 
\sum_{n< N,\, \{ (pn)^c \} \in [\frac {k_1}H, \frac{k_1+1}H),\,  \{ (qn)^c \} \in [\frac {k_2}H, \frac{k_2+1}H) } 
e\left(\alpha \left((pn)^c -(qn)^c \right) - \alpha \left( \frac {k_1}H - \frac{k_2} H \right) \right) 
+ O\left( \frac NH \right) 
\end{align*}
It is, thus, sufficient to study the sums
\begin{align*}
S_{k_1,k_2} &:= \sum_{n< N,\, \{ (pn)^c \} \in [\frac {k_1}H, \frac{k_1+1}H),\,  \{ (qn)^c \} \in [\frac {k_2}H, \frac{k_2+1}H) } 
e\left(\alpha \left((pn)^c -(qn)^c \right) \right) \\
&= \sum_{n< N} 
e\left(\alpha \left((pn)^c -(qn)^c \right) \right)
\chi_{1/H}\left( (pn)^c - \frac {k_1}H \right) 
\chi_{1/H}\left( (qn)^c - \frac {k_2}H \right) 
\end{align*}
For this purpose we approximate the indicator function $\chi_{1/H}$ with the help of a Lemma due to Vaaler (see \cite[Theorem A.6]{GK1991})
and obtain
\begin{align*}
&\left| \chi_{1/H}(x) \chi_{1/H}(y) - \sum_{|h_1|, |h_2| \le H^3 } a_{h_1}(H^{-1},H^3) a_{h_2}(H^{-1},H^3) e(h_1x + h_2 y) \right| \\
& \ll \sum_{ |h| \le H^3 } b_{h}(H^{-1}, H^3) \left( e(hx) + e(hy) \right),
\end{align*}
where 
\[
a_0(H^{-1},H^3) = \frac 1H, \quad, | a_h(H^{-1},H^3) | \le \min\{ \frac 1H, \frac 1{\pi|h|} \}, \quad 
| b_h(H^{-1},H^3) | \le \frac 1{H^3 +1}.
\]
Thus, $S_{k_1,k_2}$ can be estimated by
\begin{align*} 
|S_{k_1,k_2}| & \ll \sum_{|h_1|, |h_2| \le H^3 } |a_{h_1}(H^{-1},H^3) a_{h_2}(H^{-1},H^3)| 
\left| \sum_{n< N} e\left(  \left( \left( \alpha + h_1 \right) p^c + 
\left( -\alpha + h_2 \right) q^c \right) n^c \right)  \right|\\
&+ \sum_{ |h| \le H^3 } |b_{h}(H^{-1}, H^3)| \left| \sum_{n< N} e\left( h p^c n^c \right) \right| 
+ \sum_{ |h| \le H^3 } |b_{h}(H^{-1}, H^3)| \left| \sum_{n< N} e\left( h q^c n^c \right) \right| \\
&\ll c_1(H) N^{1 - \frac{\|c\|}{2^c} } + \frac N{H^3} + c_2(H) N^{1 - \frac{\|c\|}{2^c} },
\end{align*}
where $c_1(H)$ and $c_2(H)$ are constants depending on $H$ and where 
we have used Lemma~\ref{lem42} and the property that (by assumption) 
\[
\min_{|h_1|, |h_2| \le H^3 } \left| \left( \alpha + h_1 \right) p^c + \left( -\alpha + h_2 \right) q^c \right| > 0.
\]
Summing up and letting $N\to\infty$ it follows 
\[
\limsup_{N\to\infty} \frac{ |S| } N \ll \frac 1H + H^2 \frac 1{H^3} \ll \frac 1H.
\]
Since $H$ can be chosen arbitrarily large it finally follows that $S = o(N)$ as $N\to\infty$.
This completes the proof of Theorem \ref{thm_daboussi-sarnak}
and thus that of Theorem~\ref{thm_sarnak}.

\bibliographystyle{siam}

\end{document}